\newtheorem{defn}{Definition}[section]
\newtheorem{thm}{Theorem}[section]
\newtheorem{prop}{Proposition}[section]
\newtheorem{lem}{Lemma}[section]
\newtheorem{cor}{Corollary}[section]
\newtheorem{rem}{Remark}[section]
\newtheorem{claim}{Claim}[section]
\numberwithin{equation}{section}
\DeclareMathOperator*{\sgn}{sgn}
\begin{document}
\title{Mean field limit for one dimensional opinion dynamics with Coulomb interaction and time dependent weights}

\author{
Immanuel Ben-Porat%
     \thanks{Mathematical Institute, University of Oxford, Oxford OX2 6GG, UK.  {Immanuel.BenPorat@maths.ox.ac.uk}} %
\and
    Jos\'e A. Carrillo%
     \thanks{Mathematical Institute, University of Oxford, Oxford OX2 6GG, UK.  {carrillo@maths.ox.ac.uk}} %
    \and 
    Sondre T. Galtung%
     \thanks{Department of Mathematical Sciences, Norwegian University of Science and Technology -- NTNU, 7491 Trondheim, Norway.{sondre.galtung@ntnu.no}}
}

\maketitle

\begin{abstract}
The mean field limit with time dependent weights for a 1D singular case, given by the attractive Coulomb interactions, is considered. This extends recent results \cite{1,8} for the case of regular interactions. The approach taken here is based on transferring the kinetic target equation to a Burgers-type equation through the distribution function of the measures. The analysis leading to the stability estimates of the latter equation makes use of Kruzkov entropy type estimates adapted to deal with nonlocal source terms.
\end{abstract}

\section{Introduction }

\subsection{General background }

In this paper, we are concerned with analyzing the mean field limit
of the following system of $2N$ ODEs:
\begin{equation}\label{eq:OPINION DYNAMICS}
\begin{cases}
\dot{x}_{i}^{N}(t) = \frac{1}{N}\stackrel[j=1]{N}{\sum}m_{j}^{N}(t)V'(x_{j}^{N}(t)-x_{i}^{N}(t)), & x_{i}^{N}(0)=x_{i}^{0,N} \\
\dot{m}_{i}^{N}(t) = \psi_{i}^{N}(\mathbf{x}_{N}(t),\mathbf{m}_{N}(t)), & m_{i}^{N}(0)=m_{i}^{0,N}.
\end{cases}
\end{equation}
The notation is as follows: the unknowns $x_{i}^N\in\mathbb{R}$ and
$m_{i}^N\in\mathbb{R}$ are referred to as the \textit{opinions} and
\textit{weights} respectively. The evolution of the opinions is given
in terms of the weights and a function $V:\mathbb{R}\rightarrow\mathbb{R}$
which is called \textit{the interaction} modulating the value of the opinion $x_i^N$ by the presence of the other opinions $x_j^N$. The evolution of the weights is given by means of functions $\psi_{i}^{N}:\mathbb{R}^{N}\times\mathbb{R}^{N}\rightarrow\mathbb{R}$
where we apply the notation 
\[
\mathbf{x}_{N}(t)\coloneqq(x_{1}^{N}(t),...,x_{N}^{N}(t)), \qquad \mathbf{m}_{N}(t)\coloneqq(m_{1}^{N}(t),...,m_{N}^{N}(t)).
\]
The weights $m_i^N(t)$ can be interpreted as the proportion of the total population with opinion $x_i^N(t)$.
How the system (\ref{eq:OPINION DYNAMICS}) originates from real-life
phenomena is beyond the scope of this work. Just to mention a few
works which explain how this system models phenomena in biology and
the social sciences, we refer to \cite{1},\cite{13} and \cite{16}.
The system (\ref{eq:OPINION DYNAMICS}) is a weighted version of the
first order $N$-body problem (simply by taking all the weights to
be identically equal to $1$), to which we now briefly draw our attention
to. As to give an extensive review of the relevant literature, we momentarily consider arbitrary $d\geq1$, although our main result is in 1D. By now, the mean field limit of the $N$-body problem 
\begin{equation}
\dot{x}_{i}^{N}(t)=\frac{1}{N}\stackrel[j=1]{N}{\sum}\nabla V(x_{j}^N(t)-x_{i}^N(t)), \qquad x_{i}^{N}(0)=x_{i}^{0,N}\label{N body problem}
\end{equation}
is fairly well understood, even in the case of interactions with strong
singularities near the origin---we will comment more about this later
on. This mean field limit is understood in terms of the empirical
measure which is defined by 
\[
\mu_{N}(t)\coloneqq\frac{1}{N}\stackrel[i=1]{N}{\sum}\delta_{x_{i}^N(t)}.
\]
Thanks to the work of Dobrushin \cite{7}, and assuming $\nabla V$ is Lipschitz, it is possible to show that $\mu_{N}(t)$ converges to the solution $\mu$ of the Vlasov equation 

\begin{equation}
\partial_{t}\mu(t,x)-\mathrm{div}(\mu\nabla V\star\mu)(t,x)=0, \qquad \mu(0,\cdot)=\mu^{0} \label{eq:homogenous transport}
\end{equation}
 with respect to the Wasserstein metric (provided this is true initially
of course). When time dependent weights are included the problem is
rendered difficult, since now the candidate for the empirical measure
is 
\[
\mu_{N}(t)\coloneqq\frac{1}{N}\stackrel[i=1]{N}{\sum}m_{i}^N(t)\delta_{x_{i}^N(t)}
\]
 and formal considerations (see Proposition 15 in \cite{7}) suggest the following transport equation with self-consistent source term as the target equation 

\begin{equation}
\partial_{t}\mu(t,x)-\mathrm{div}(\mu\nabla V\star\mu)(t,x)=h[\mu](t,x), \qquad \mu(0,\cdot)=\mu^{0}.\label{vlasov with source}
\end{equation}
Here $h[\mu]$ is the self-consistent source term which arises from
the inclusion of weights, and for the moment we do not specify it.
% Hence, the inclusion of weights translates into a self consistent source term in the target equation.
Already at the level of the well
posedness theory of the target equation (\ref{vlasov with source})
and the system (\ref{eq:OPINION DYNAMICS}) some care is needed, since
apriori it is not entirely that the solution stays a probability density
for all times. The well-posedness of equations (\ref{eq:OPINION DYNAMICS})
and (\ref{vlasov with source}) as well as the weighted mean field
limit has been successfully established in \cite{1} and \cite{8}
in arbitrary dimension and for interactions with Lipschitz gradient.
The approach in \cite{8} is based on stability estimates for the
Wasserstein distance, which imply both well-posedness and mean field
limit, whereas the approach in \cite{1} recovers to some extent the mean field limit obtained in \cite{8}, and is based on the graph limit
method. We refer also to \cite{14} and \cite{15} for more details
about the graph limit regime and its link with the mean field limit.
Other works which consider weighted opinion dynamics are \cite{10},
in which the weights are taken to be time-independent, but may vary
from one opinion to another, and \cite{16} which serves as a general
survey. 

\subsection{Main results }

All the existing literature reviewed so far concerns arbitrary dimension
and relatively well behaved potentials in terms of regularity, typically
with at least locally Lipschitz gradient. It is the aim of this work
to investigate how to overcome the challenges created due to singular
potentials in 1D. In particular, we consider the attractive 1D Coulomb
interaction $V(x)=\left|x\right|$ (so that $\partial_{x}V=\mathrm{sgn}(x)$,
with the convention $\mathrm{sgn}(0)=0$). In addition, we add further
limitations on the equation governing the weights, namely we take
\begin{equation}
\psi_{i}^{N}(\mathbf{x}_{N},\mathbf{m}_{N})=\frac{1}{N}\stackrel[j=1]{N}{\sum}m_{i}^N m_{j}^N S(x_{j}^N-x_{i}^N)\label{eq:special form}
\end{equation}
where $S\in C_{0}^{\infty}(\mathbb{R})$ is assumed to be odd. From the opinion modelling point of view, the value $|S(x_{j}^N-x_{i}^N)|$ in \eqref{eq:special form} can be interpreted as the rate of change from opinion $x_i^N$ to $x_j^N$ at a given time. In view of this interpretation the oddness of the function $S$ is natural from the assumption of conservation of the total population of individuals. In fact, it means that the proportion of individuals that change their opinion from value $x_i^N$ to value $x_j^N$ is the opposite of the individuals that change their opinion from value $x_j^N$ to value $x_i^N$ at a given time. Note that at a first glance it seems we allow that at any given time the individuals can change abruptly from opinion $x_i^N$ to opinion $x_j^N$. However, if the rate function $S$ is of compact support, as in our main result in Theorem \ref{Main Theorem }, then the allowed change of opinion is local and smooth. As we already mentioned, this
parity condition turns out to be important for the purpose of guaranteeing
preservation of the total mass. For these choices, the system (\ref{opinion dynamics})
takes the form 
\begin{equation}\label{opinion dynamics with kernel S}
\begin{cases}
\dot{x}_{i}^{N}(t) = \frac{1}{N}\stackrel[j=1]{N}{\sum}m_{j}^{N}(t)\sgn(x_{j}^{N}(t)-x_{i}^{N}(t)), & x_{i}^{N}(0)=x_{i}^{0,N}, \\
\dot{m}_{i}^{N}(t) = \frac{1}{N}\stackrel[j=1]{N}{\sum}m_{i}^{N}(t)m_{j}^{N}(t)S(x_{j}^{N}(t)-x_{i}^{N}(t)), & m_{i}^{N}(0)=m_{i}^{0,N}.
\end{cases}
\end{equation}
In this case, the mean field equation takes the form 
\[
\partial_{t}\mu(t,x)-\partial_{x}\left(\mu(t,x)%\stackrel[-\infty]{x}
\int^x_{-\infty}\mu(t,y)dy-\mu(t,x)\int^\infty_{x}\mu(t,y)dy\right)=\mu(t,x)S\star\mu(t,x), \qquad
\mu(0,x)=\mu^{0}.
\]
When the kernel $S$ is odd one expects that $\mu$ stays a probability
density for all times and therefore the equation formally transforms
to
\begin{equation}\label{eq:-2}
\partial_{t}\mu(t,x)-\partial_{x}\left(\mu(t,x)\left(2\int^x_{-\infty}\mu(t,y)dy-1\right)\right)=\mu(t,x)S\star\mu(t,x), \qquad
\mu(0,x)=\mu^{0}.
\end{equation}
Setting $F(t,x)=-\frac{1}{2}+\int^x_{-\infty}\mu(t,y)dy$
and integrating both sides of equation (\ref{eq:-2}), we arrive (still
at the formal level) at the following non local Burgers type equation
for $F$

\begin{equation}
\partial_{t}F+\partial_{x}(A(F))=\mathbf{S}[F](t,x)\label{eq:-3}
\end{equation}
where 
$
A(F)\coloneqq-F^{2} 
$ and
\[
\mathbf{S}[F](t,x)\coloneqq F(t,x)(\phi\star F)(t,x)-\int^x_{-\infty}F(t,z)(\partial_{z}\phi\star F)(t,z)dz,\ \phi\coloneqq\partial_{x}S.
\]
Of course, the definition of $\mathbf{S}[F]$ is motivated by integrating
by parts the expression $\int^x_{-\infty}\partial_{z}F(t,z)(S\star\partial_{z}F)(t,x)dz$.
The idea of analyzing the equation for the primitive of $\mu$ stems
from the work \cite{4} which studies the homogenous Burgers equation.
The advantage of the equation (\ref{eq:-3}) in comparision to (\ref{eq:-2})
is that the flux term is local, which raises the hope that a well
posedness theory is in reach. We refer to \cite{6} for a well posedness
theory in the case of non-local fluxes (yet with no source term).
By closely adapting the method introduced in \cite{11} (which has
its roots in the classical Kruzkov entropy) we are able to simultaneously
prove the $BV$-well posedness of the equation (\ref{eq:-3}), and
derive the unique solution of (\ref{eq:-3}) in the limit as $N\rightarrow\infty$
of the primitive of the empirical measure (shifted by $-\frac{1}{2}$
in order to ensure proper cancellation), namely 
$$
F_{N}(t,x)\coloneqq-\frac{1}{2}+\frac{1}{N}\stackrel[k=1]{N}{\sum}m_{k}^{N}(t)H(x-x_{k}^{N}(t))
$$
where $H$ is the Heaviside function. A notable difference with respect to \cite{11} is that we deal with time dependent fluxes for the approximation sequence of conservation laws with sources satisfied by $F_N$ associated to the particle system \eqref{opinion dynamics with kernel S}, see \eqref{discretized Burgers} below. In addition, we also face technical difficulties in checking the entropy conditions of the accumulation limits of the sequence $F_N$ due to the time dependency. Other differences
which put our work in variance with \cite{11} are reflected in the
well-posedness for the system (\ref{eq:OPINION DYNAMICS}), the handling
of the different source term, and the verification that $F_{N}$ is an entropy
solution of (a discretized version of) equation (\ref{eq:-3}).
We stress that the 1D settings are essential for the analysis to be
carried out properly. In the context of higher dimensions, we mention
the recent breakthrough \cite{18}, in which the author succeeded
in deriving the equation (\ref{eq:homogenous transport}) from the
system (\ref{N body problem}) for interaction which may have even
stronger singularities than Coulomb. The modulated energy strategy
in \cite{18} is well suited for higher dimensions, which raises the
interesting question whether this strategy can be extended to the
framework of the present work in which time dependent weights are
considered. 

In Section 2 we fix some notation, prove the well posedness
of the system (\ref{eq:OPINION DYNAMICS}) for the case of the 1D
attractive Coulomb interaction and record other basic properties of the solutions
of the ODE system. Section 3 is aimed at constructing a discretized flux and showing that $
F_{N}
$ satisfies the Burgers equation associated with this flux. Section 4 is devoted towards proving Theorem \ref{Main Theorem }, the main result of this work. The mean field limit and
the $BV$-well posedness of equation (\ref{eq:-3}) are the content of Theorem \ref{Main Theorem }. 

\section{Preliminaries }

For readability, the upper index $N$ of the opinions/weights appears implicitly in the sequel. Consider the weighted $N$-body problem 
\begin{equation}\label{opinion dynamics}
\begin{cases}
\dot{x}_{i}(t)=\frac{1}{N}\underset{1\leq j\leq N:j\neq i}{\sum}m_{j}(t)\mathrm{sgn}(x_{j}(t)-x_{i}(t)), & x_{i}(0)=x_{i}^{0}, \\
\dot{m}_{i}(t) = \psi_{i}^{N}(\mathbf{x}_{N}(t),\mathbf{m}_{N}(t)), & m_{i}(0)=m_{i}^{0}.
\end{cases}
\end{equation}
For the purpose of establishing well posedness of the system (\ref{opinion dynamics}) on short times it is not strictly necessary to take $\psi_{i}^{N}$ to be of the
form (\ref{eq:special form}). However, in order to have well-posedness for all times this special form, and specifically the oddness of $S$ seems to be needed at least to some extent. The hypotheses that we impose on $\psi_{i}^{N}$
in the sequel are identical to those specified in \cite{1}
and are recalled below. 

\medskip{}
\textbf{Hypothesis (H1). }There is a constant $L>0$ such that for
all $\left(\mathbf{x}_{N},\mathbf{y}_{N},\boldsymbol{\mathbf{m}}_{N},\mathbf{p}_{N}\right)\in\mathbb{R}^{4N}$
the inequalities 

\[
\left|\psi_{i}^{N}(\mathbf{x}_{N},\boldsymbol{\mathbf{m}}_{N})-\psi_{i}^{N}(\mathbf{y}_{N},\boldsymbol{\mathbf{m}}_{N})\right|\leq L\left|\mathbf{x}_{N}-\mathbf{y}_{N}\right|,
\]

\[
\left|\psi_{i}^{N}(\mathbf{x}_{N},\boldsymbol{\mathbf{m}}_{N})-\psi_{i}^{N}(\mathbf{x}_{N},\boldsymbol{\mathbf{p}}_{N})\right|\leq L\left|\boldsymbol{\mathbf{m}}_{N}-\mathbf{p}_{N}\right|
\]
hold. In addition, there is a constant $C>0$ such that and for each
$\left(\mathbf{x}_{N},\boldsymbol{\mathbf{m}}_{N}\right)\in\mathbb{R}^{4N}$
it holds that 

\begin{equation}
\left|\psi_{i}^{N}(\mathbf{x}_{N},\boldsymbol{\mathbf{m}}_{N})\right|\leq C(1+\underset{1\leq k\leq N}{\max}\left|m_{k}\right|).\label{eq:sublinear functional}
\end{equation}
We shall hereafter supress the dependence on $N$ of the opinions/weights
whenever this dependence is irrelevant. Since we wish to prove well
posedness on a time interval which does not shrink to zero as $N\rightarrow\infty$,
a natural assumption to impose is that the opinions are seperated
initially, i.e. 
\begin{equation}
\forall N\in\mathbb{N}: x_{1}^{0}<...<x_{N}^{0}.\label{eq:seperation}
\end{equation}
\subsection{Fillipov solutions }

Before embarking on the task of establishing the stability estimates,
we must first prove the existence of a solution to the Equation (\ref{opinion dynamics with kernel S})
on some time interval $[0,T]$, where \textit{$T$ is independent of
$N$} (because eventually we wish to take $N\rightarrow\infty$).
It is the aim of this section to explain how this is achieved- in fact we will prove existence for arbitrarily large times. First,
let us be lucid about the notion of solution that we work with. We start by examining the more general ODE \ref{opinion dynamics} and we specify later  the stage in which the special form \ref{eq:special form} comes into play.
\begin{defn}
\label{strong solution } A \textit{classical solution} of the system
(\ref{opinion dynamics}) on $[0,T${]} is an absolutely continuous
curve $t\mapsto(\mathbf{x}_{N}(t),\boldsymbol{\mathbf{m}}_{N}(t))\in\mathrm{AC}([0,T];\mathbb{R}^{2N})$
such that the system (\ref{opinion dynamics}) is satisfied for a.e.
$t\in[0,T]$. Equivalently, for all $t\in[0,T]$ it holds that 
\begin{align*}
x_{i}(t) &= x_{i}^{0}+\int^t_{0}\frac{1}{N}\underset{1\leq j\leq N:j\neq i}{\sum}m_{j}(\tau)\sgn\left(x_{j}(\tau)-x_{i}(\tau)\right)d\tau, \\
m_{i}(t) &= m_{i}^{0}+\int^t_{0}\psi_{i}^{N}(\mathbf{x}_{N}(\tau),\mathbf{m}_{N}(\tau))d\tau .
\end{align*}
\end{defn}
In order to prove existence of solutions in the sense of Definition
\ref{strong solution } it is convenient to use the machinery of differential
inclusions as developed by Fillipov \cite{9}. The special form (\ref{opinion dynamics with kernel S})
is irrelevant for what concerns the abstract theory of Fillipov. First
we must review some basic definitions and facts from convex analysis.
Let us recall how to view the system (\ref{opinion dynamics}) as
a differential inclusion. Let $\Omega\subset\mathbb{R}^{l}$ be some
domain. Given a vector field $f:\Omega\rightarrow\mathbb{R}^{l}$
whose set of discontinuities is $D$ we assign to it a set valued map
$\mathcal{S}[f]:\Omega\rightarrow2^{\mathbb{R}^{l}}$ defined as 

\[
\mathcal{S}[f](X)\coloneqq\left\{ \underset{X^{\ast}\notin D}{\underset{X^{\ast}\rightarrow X}{\lim}}f(X^{\ast})\right\} .
\]
With this notation define the set valued map $\mathcal{F}[f]:\Omega\rightarrow2^{\mathbb{R}^{l}}$by
\begin{equation}
\mathcal{F}[f](X)\coloneqq\overline{\mathrm{co}}\mathcal{S}[f](X),\label{eq:definition of associated set valued map}
\end{equation}
where $\mathrm{co}$ stands for the convex hull. 
\begin{rem}
In the context of system (\ref{opinion dynamics}) we take $f(\mathbf{x}_{N},\mathbf{m}_{N})$ to be the vector
\begin{equation}
\left(\frac{1}{N}\underset{1\leq j\leq N}{\sum}m_{1}\mathrm{sgn}(x_{j}-x_{1}),\ldots,\frac{1}{N}\underset{1\leq j\leq N}{\sum}m_{N}\mathrm{sgn}(x_{j}-x_{N}),\psi_{1}^{N}(\mathbf{x}_{N},\mathbf{m}_{N}),\ldots,\psi_{N}^{N}(\mathbf{x}_{N},\mathbf{m}_{N})\right).\label{explicit formula for vector field}
\end{equation}
\end{rem}
Next, we recall the notion of upper-semicontinuity. 
\begin{defn}
For each closed sets $A,B\subset\Omega$ let $\beta(A,B)\coloneqq \sup_{a\in A} \inf_{b\in B} \left|a-b\right|$.
A function $\mathcal{F}:\Omega\rightarrow2^{\mathbb{R}^{l}}$ is said
to be \textit{upper-semicontinuous at $p\in\Omega$} if $\underset{p'\rightarrow p}{\lim}\beta(\mathcal{F}(p),\mathcal{F}(p'))=0$.
If $\mathcal{F}$ is upper-semicontinuous for each $p\in\Omega$ then
we say that it is \textit{upper-semicontinuous. }
\end{defn}
On the other hand, recall the notion of a solution to a differential
inclusion 
\begin{defn}
\label{definition of solution to differential inclusion } Let $\mathcal{F}:\Omega\rightarrow2^{\mathbb{R}^{l}}$
be a set valued map. A \textit{solution }on $[\underline{T},\overline{T}]$
to the differential inclusion 
\begin{equation}
\dot{X}(t)\in\mathcal{F}(X(t)) \label{eq:dincl}
\end{equation}
 is an absolutely continuous map $X:[\underline{T},\overline{T}]\rightarrow\mathbb{R}^{l}$
such that \eqref{eq:dincl} holds
for a.e. $t\in[\underline{T},\overline{T}]$. 
\end{defn}
The existence theory of differential inclusions has been developed
by several different groups (Just to mention a few, see \cite{2},\cite{3}
and \cite{9}). We follow \cite{9}, and by no means aim to give an
exhaustive overview of this rich theory. The following theorem is
the main tool that we need to get existence for the opinion dynamics. 
\begin{thm}
\textup{\label{existence of filippov solution }(\cite{9}, Theorem
1, Page 77) }Let $\mathcal{F}:\mathbb{R}^{l}\rightarrow2^{\mathbb{R}^{l}}$
be such that 

1. $\mathcal{F}(x)\neq\emptyset$ for all $x\in\Omega$. 

2. $\mathcal{F}(x)$ is compact and convex for all $x\in\Omega$.

3. $\mathcal{F}$ is upper-semicontinuous. 

Then for any $\overline{T}>0$ and any $X_{\underline{T}}\in\mathbb{R}^{l}$
there exist a solution (in the sense of Definition \ref{definition of solution to differential inclusion })
to the differential inclusion 

\[
\dot{X}(t)\in\mathcal{F}(X(t)),\ X(\underline{T})=X_{\underline{T}}
\]
on the time interval $[\underline{T},\overline{T}]$.
\end{thm}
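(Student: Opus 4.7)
The plan is to follow the classical Euler polygonal approximation scheme combined with a compactness argument, of the type used for Peano's theorem but adapted to set-valued right-hand sides. A preliminary observation is that hypotheses 2 and 3 together imply that $\mathcal{F}$ is locally bounded: if $\mathcal{F}$ were unbounded on some bounded sequence $x_n\to x$, picking $v_n\in\mathcal{F}(x_n)$ with $|v_n|\to\infty$ would contradict upper-semicontinuity at $x$, since $\mathcal{F}(x)$ is compact. For the conclusion to hold on a prescribed interval $[\underline{T},\overline{T}]$ one needs also a growth bound (at most linear in $|x|$) to rule out blow-up; this will be implicit below and is indeed supplied, in the paper's intended application, by the sublinear estimate in Hypothesis (H1).

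First I would construct approximate solutions. Partition $[\underline{T},\overline{T}]$ into $n$ subintervals with nodes $t_k$, set $X_n(\underline{T})=X_{\underline{T}}$, and inductively pick any $v_k\in\mathcal{F}(X_n(t_k))$---nonempty by hypothesis 1---and extend $X_n$ affinely on $[t_k,t_{k+1}]$ with slope $v_k$. Using the local boundedness of $\mathcal{F}$ plus a Gr\"onwall-type argument based on the growth control, one obtains a uniform bound $\|X_n\|_{L^\infty}\leq R$ and consequently $\|\dot{X}_n\|_{L^\infty}\leq M$ on $[\underline{T},\overline{T}]$, with $R,M$ independent of $n$. The sequence $\{X_n\}$ is therefore uniformly bounded and equi-Lipschitz, so Arzel\`a--Ascoli yields a subsequence (not relabeled) converging uniformly to some $X\in\mathrm{AC}([\underline{T},\overline{T}];\mathbb{R}^l)$, and by extracting further one may assume $\dot{X}_n\rightharpoonup^\ast \dot{X}$ in $L^\infty$.

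The key step is to verify $\dot{X}(t)\in\mathcal{F}(X(t))$ at a.e.\ $t$. Fix a Lebesgue point $t$ of $\dot{X}$ and let $\epsilon>0$. By upper-semicontinuity at $X(t)$, there is $\delta>0$ such that $\mathcal{F}(y)\subset \mathcal{F}(X(t))+\overline{B_\epsilon}$ whenever $|y-X(t)|<\delta$. Uniform convergence $X_n\to X$ and equi-Lipschitz continuity guarantee that for $n$ large and $|\tau-t|$ small, both $X_n(\tau)$ and the relevant node $X_n(t_k)$ lie within $\delta$ of $X(t)$; consequently $\dot{X}_n(\tau)=v_k\in\mathcal{F}(X(t))+\overline{B_\epsilon}$ for a.e.\ such $\tau$. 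Since $\mathcal{F}(X(t))+\overline{B_\epsilon}$ is closed and convex (hypothesis 2), averaging over $[t-\eta,t+\eta]$ preserves membership, and passing to the weak-$\ast$ limit in $n$ yields
\[
\frac{1}{2\eta}\int_{t-\eta}^{t+\eta}\dot{X}(\tau)\,d\tau \in \mathcal{F}(X(t))+\overline{B_\epsilon}.
\]
Letting $\eta\to 0$ (Lebesgue differentiation) and then $\epsilon\to 0$ (using compactness of $\mathcal{F}(X(t))$) gives $\dot{X}(t)\in\mathcal{F}(X(t))$, as desired.

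The main obstacle is the last passage, which is where all three hypotheses are essential and intertwine: nonemptiness is used to construct the $v_k$, convexity is what permits the averaging/weak-limit step to remain inside the target set, and upper-semicontinuity is what transfers inclusions at nearby points $X_n(t_k)$ into an inclusion at the limit point $X(t)$. Without convexity one would need Mazur's lemma and a relaxation argument; without upper-semicontinuity the local containment $\mathcal{F}(y)\subset \mathcal{F}(X(t))+\overline{B_\epsilon}$ fails. The growth control is a separate but unavoidable ingredient needed to obtain existence on the whole prescribed interval rather than only on a small one depending on $X_{\underline{T}}$.
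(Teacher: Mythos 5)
Your argument is essentially the classical Euler-polygon/compactness proof (as in Filippov's book or Aubin--Cellina), and it is sound: the local boundedness deduced from compact values plus upper semicontinuity, the equi-Lipschitz Euler approximations, Arzel\`a--Ascoli, and the verification of the inclusion at Lebesgue points via the containment $\mathcal{F}(y)\subset\mathcal{F}(X(t))+\overline{B_{\varepsilon}}$ together with closedness and convexity of that set (note the weak-$\ast$ extraction is not even needed, since the averages are difference quotients $\bigl(X_{n}(t+\eta)-X_{n}(t-\eta)\bigr)/2\eta$ which converge by uniform convergence). There is, however, nothing in the paper to compare against: the statement is quoted verbatim from Filippov \cite{9} (Theorem 1, p.~77) and no proof is given there, so you have supplied the standard textbook argument for a cited result. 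Your caveat about the growth bound is well taken and worth emphasizing: as literally stated, with $\mathcal{F}$ defined on all of $\mathbb{R}^{l}$ and only hypotheses 1--3, existence on an arbitrary prescribed interval $[\underline{T},\overline{T}]$ is false (take $\mathcal{F}(x)=\{x^{2}\}$ in dimension one), and Filippov's theorem is a local existence result to be combined with a continuation argument; in the paper's application the required a priori bound comes from the boundedness of $\mathrm{sgn}$ and the sublinearity in Hypothesis (H1), exactly as you indicate. So your proof is correct once the growth/boundedness hypothesis is made explicit, which is a defect of the quoted statement rather than of your argument.
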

Here is a convenient criteria for upper semi-continuity 
\begin{lem}
\textup{(\cite{9}, Lemma 3, Page 67)} \label{set valued map is upper semicontinuous }
Let $f:\Omega\rightarrow\mathbb{R}^{l}$ be piecewise continuous and
let $\mathcal{F}[f]$ be defined as in (\ref{eq:definition of associated set valued map}).
Then $\mathcal{F}[f]$ is upper-semicontinuous. 
\end{lem}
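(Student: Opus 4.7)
The plan is to argue from the local structure of a piecewise continuous vector field. Fix $p\in\Omega$. By piecewise continuity, on a sufficiently small neighborhood $U_{0}$ of $p$ one has a decomposition $U_{0}=\bigcup_{i=1}^{k}\Omega_{i}$ into finitely many relatively open regions with pairwise disjoint boundaries (the local pieces of $D$), and on each $\Omega_{i}$ the restriction $f|_{\Omega_{i}}$ admits a continuous extension $\bar{f}_{i}:\overline{\Omega_{i}}\cap U_{0}\to\mathbb{R}^{l}$. Directly from the definition, $\mathcal{S}[f](p)$ is then exactly the finite set $\{\bar{f}_{i}(p):p\in\overline{\Omega_{i}}\}$, and $\mathcal{F}[f](p)=\overline{\mathrm{co}}\,\mathcal{S}[f](p)$ is the convex hull of these finitely many values (already closed, being the convex hull of a compact set).

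Next I would localize the set of relevant branches. By shrinking $U_{0}$ to some smaller neighborhood $U\subset U_{0}$, we may assume that every $\Omega_{i}$ meeting $U$ already satisfies $p\in\overline{\Omega_{i}}$; this is possible because finitely many regions touch $p$ and the others are bounded away from it. Thus for any $p'\in U\setminus D$ one has $f(p')=\bar{f}_{i(p')}(p')$ for some index $i(p')$ belonging to the finite set of branches at $p$, and more generally $\mathcal{S}[f](p')\subseteq\{\bar{f}_{i}(p'):p\in\overline{\Omega_{i}}\}$. Given $\varepsilon>0$, uniform continuity of each $\bar{f}_{i}$ at $p$ provides a $\delta>0$ such that $|\bar{f}_{i}(p')-\bar{f}_{i}(p)|<\varepsilon$ whenever $|p'-p|<\delta$, uniformly in $i\in\{1,\dots,k\}$. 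Hence every element of $\mathcal{S}[f](p')$ lies within distance $\varepsilon$ of an element of $\mathcal{S}[f](p)\subset\mathcal{F}[f](p)$.

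To promote this from $\mathcal{S}$ to $\mathcal{F}$, I would use convexity. Any $v\in\mathrm{co}\,\mathcal{S}[f](p')$ can be written as a convex combination $v=\sum_{j}\lambda_{j}s_{j}$ with $s_{j}\in\mathcal{S}[f](p')$, and by the previous step there exist $t_{j}\in\mathcal{S}[f](p)\subset\mathcal{F}[f](p)$ with $|s_{j}-t_{j}|<\varepsilon$. Since $\mathcal{F}[f](p)$ is convex, the point $w:=\sum_{j}\lambda_{j}t_{j}$ lies in $\mathcal{F}[f](p)$ and $|v-w|\le\sum_{j}\lambda_{j}|s_{j}-t_{j}|<\varepsilon$. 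Passing to the closure, the same bound holds for $v\in\overline{\mathrm{co}}\,\mathcal{S}[f](p')=\mathcal{F}[f](p')$. This shows $\sup_{v\in\mathcal{F}[f](p')}\inf_{w\in\mathcal{F}[f](p)}|v-w|<\varepsilon$ once $|p'-p|<\delta$, which is exactly the upper-semicontinuity asserted by the lemma.

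The only delicate point is really the localization step: piecewise continuity must be strong enough that only finitely many branches are active near $p$, so that the continuity moduli of the $\bar{f}_{i}$ can be combined uniformly. Once this is granted, the rest is a soft convexity argument and no quantitative work is needed on the geometry of $D$.
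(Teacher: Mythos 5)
The paper itself gives no proof of this lemma---it is imported verbatim from Filippov---so there is nothing internal to compare with; your argument is correct and is essentially the standard one: under Filippov's notion of piecewise continuity only finitely many pieces, each carrying a continuous extension $\bar f_i$ up to its closure, meet a small neighborhood of $p$, hence $\mathcal{S}[f](p')\subset\{\bar f_i(p'):p\in\overline{\Omega_i}\}$ lies within $\varepsilon$ of $\mathcal{S}[f](p)$ for $p'$ near $p$, and matching convex combinations term by term transfers this to the closed convex hulls, the hulls of finite sets being automatically closed. The only points worth flagging are that what you actually prove is $\beta(\mathcal{F}[f](p'),\mathcal{F}[f](p))\to 0$ as $p'\to p$, which is Filippov's notion of upper semicontinuity and the one needed for the existence theorem, whereas the paper's definition writes $\beta(\mathcal{F}(p),\mathcal{F}(p'))$ with the arguments in the opposite order (as literally stated that condition fails already for $f=\mathrm{sgn}$ at the origin, so your orientation is the correct reading), and that your inclusion $\mathcal{S}[f](p')\subseteq\{\bar f_i(p')\}$ silently uses that a sequence avoiding $D$ may still hit inter-piece boundary points, where one argues that continuity of $f$ there forces $f=\bar f_i$ for an adjacent piece and then passes to a constant-index subsequence by pigeonhole---a harmless, easily filled step.
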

We can now gather all of the above reminders to get the following
important conclusion 
\begin{cor}
\label{Corollary } Let $\psi_{i}^{N}$ satisfy (H1). Let $f:\mathbb{R}^{2N}\rightarrow\mathbb{R}^{2N}$
be given by (\ref{explicit formula for vector field}). For any $\overline{T}>0$
and any $(\mathbf{x}_{N}^{\underline{T}},\mathbf{m}_{N}^{\underline{T}})\in\mathbb{R}^{2N}$
there exist a solution to the differential inclusion 
\[
(\dot{\mathbf{x}}_{N},\dot{\mathbf{m}}_{N})\in\mathcal{F}[f](\mathbf{x}_{N}(t),\mathbf{m}_{N}(t)), \qquad (\mathbf{x}_{N}(\underline{T}), \mathbf{m}_{N}(\underline{T}))=(\mathbf{x}_{N}^{\underline{T}},\mathbf{m}_{N}^{\underline{T}})
\]
on $[\underline{T},\overline{T}]$. 
\end{cor}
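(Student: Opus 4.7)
The plan is to invoke Theorem \ref{existence of filippov solution } applied to the vector field $f$ displayed in \eqref{explicit formula for vector field} and its associated set-valued map $\mathcal{F}[f]$ defined in \eqref{eq:definition of associated set valued map}. This reduces the corollary to verifying the three hypotheses of that theorem for $\mathcal{F}[f]$, after which the conclusion is immediate.

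First I would locate the discontinuity set of $f$. The last $N$ components $\psi_i^N$ are Lipschitz (hence continuous) by (H1). Each of the first $N$ components is a weighted sum of terms of the form $\mathrm{sgn}(x_j - x_i)$, each of which is continuous off the hyperplane $\{x_i = x_j\}$. Hence the discontinuity set of $f$ is contained in the finite union $D = \bigcup_{1 \leq i < j \leq N} \{x_i = x_j\} \subset \mathbb{R}^{2N}$, and on each connected component of $\mathbb{R}^{2N}\setminus D$ the vector field $f$ is continuous. In particular $f$ is piecewise continuous, so Lemma \ref{set valued map is upper semicontinuous } delivers that $\mathcal{F}[f]$ is upper-semicontinuous, taking care of the third hypothesis. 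The remaining two properties are essentially structural: approaching $X$ through each of the finitely many connected components of $\mathbb{R}^{2N}\setminus D$ adjacent to $X$ produces a finite, nonempty collection of limits for $\mathcal{S}[f](X)$, so its closed convex hull $\mathcal{F}[f](X)$ is nonempty and convex by construction; compactness follows because $|\mathrm{sgn}|\leq 1$ together with the bound \eqref{eq:sublinear functional} controls $|f|$ uniformly on a neighbourhood of $X$, making $\mathcal{S}[f](X)$ bounded (and, as a finite set, closed), and its closed convex hull a compact polytope.

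With all three hypotheses in hand, Theorem \ref{existence of filippov solution } produces a solution to the differential inclusion with the given initial data. I expect the main point requiring care to be the global nature of the statement: solutions are asserted to exist on the \emph{entire} interval $[\underline{T},\overline{T}]$ for arbitrary $\overline{T}$, rather than on the merely short interval coming from a direct application of local Filippov existence. To secure this, I would exploit the linear growth bound $|f(\mathbf{x}_N,\mathbf{m}_N)|\leq C(1+|\mathbf{m}_N|)$, which follows from $|\mathrm{sgn}|\leq 1$ and (H1). Since every element of $\mathcal{F}[f](\mathbf{x}_N,\mathbf{m}_N)$ is a convex combination of limiting values of $f$, all of which obey the same bound, a Gr\"onwall argument applied to $|\mathbf{m}_N(t)|$ precludes blow-up of the weights on $[\underline{T},\overline{T}]$, and the opinions are then controlled by integrating the velocity bound $|\dot{x}_i|\leq \frac{1}{N}\sum_j |m_j|$. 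A standard continuation argument therefore extends any local Filippov solution to all of $[\underline{T},\overline{T}]$, which is the only step beyond a direct citation of the theorem.
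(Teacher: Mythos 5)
Your proposal is correct and follows essentially the same route as the paper: verify that $\mathcal{F}[f]$ is nonempty, compact and convex valued (via the finite, piecewise-continuous structure of $f$) and upper-semicontinuous (via Lemma \ref{set valued map is upper semicontinuous }), then invoke Theorem \ref{existence of filippov solution }. Your closing Gr\"onwall/continuation argument is a reasonable extra precaution but is not needed in the paper's logic, since Theorem \ref{existence of filippov solution } as stated there already yields a solution on the whole interval $[\underline{T},\overline{T}]$ for arbitrary $\overline{T}$.
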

\begin{proof}
That $\mathcal{S}[f](p)$ is compact is an immediate consequence of
the easy observation that $\mathcal{S}[f]$ is finite set-valued map.
For the same reason $\mathcal{F}[f](p)$ is the set of all convex
combinations of the points of $\mathcal{S}[f](p),$ and is therefore
compact (see, e.g., page 62 in \cite{9}). By Lemma \ref{set valued map is upper semicontinuous },
$\mathcal{F}[f]$ is upper-semicontinuous, so that by Theorem \ref{existence of filippov solution }
the claim follows. 
\end{proof}
It is important to observe that when $f$ is continuous, the map $\mathcal{F}[f]$
is single valued and as a result the corresponding differential inclusion
reduces to an ODE in the sense of Definition \ref{strong solution }.
This observation, as well as the special structure which arises from
the attractive interaction and forces opinions to stick together,
allows us to construct solutions to the system (\ref{opinion dynamics})
in the sense of Definition \ref{strong solution }. We now return to the form \ref{opinion dynamics with kernel S}. The following proposition is the main result of this section. 
\begin{prop}
\label{Existence } Let assumption (\ref{eq:seperation})
hold and suppose $m_{i}^{\mathrm{0}}>0$ and $\frac{1}{N}\stackrel[i=1]{N}{\sum}m_{i}^{\mathrm{0}}=1$. Let $S$ be odd, continuous and bounded. For each $\overline{T}>0$  there exist a solution on $[0,\overline{T}]$
to the system (\ref{opinion dynamics with kernel S}) (in the sense
of Definition (\ref{strong solution })). 
\end{prop}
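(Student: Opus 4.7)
The plan is to start from the differential inclusion solution provided by Corollary \ref{Corollary } on $[0,\overline{T}]$ and upgrade it to a classical solution in the sense of Definition \ref{strong solution }, exploiting the particular multiplicative structure of (\ref{eq:special form}) and the attractive nature of the Coulomb interaction, which together force a sticky-particle behaviour.

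First I would derive apriori bounds on the weights along any Filippov trajectory. Because $S$ is continuous, $\psi_{i}^{N}$ in (\ref{eq:special form}) is continuous in $(\mathbf{x}_{N},\mathbf{m}_{N})$, so the weight component of the Filippov inclusion is single valued and one obtains classically $\dot{m}_{i}=\frac{m_{i}}{N}\sum_{j}m_{j}S(x_{j}-x_{i})$ along the trajectory. A Duhamel representation
\[
m_{i}(t)=m_{i}^{\mathrm{in}}\exp\!\Bigl(\int_{0}^{t}\tfrac{1}{N}\sum_{j}m_{j}(\tau)S(x_{j}(\tau)-x_{i}(\tau))\,d\tau\Bigr)
\]
forces $m_{i}(t)>0$, whereas summing the weight equation over $i$ and relabelling $i\leftrightarrow j$ kills the resulting double sum by oddness of $S$, giving $\frac{1}{N}\sum_{i}m_{i}(t)\equiv 1$. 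Positivity together with this mass conservation yields $0<m_{i}(t)\leq N$ throughout $[0,\overline{T}]$.

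The heart of the argument is the sticky-collision property. For $i<j$ introduce $z_{ij}(t):=x_{j}(t)-x_{i}(t)$, so that $z_{ij}(0)>0$ by (\ref{eq:seperation}). Using $\sgn(0)=0$ and positivity of the weights, a direct computation at any time where the pair is separated yields
\[
\dot{z}_{ij}=-\,\sgn(z_{ij})\Bigl(\tfrac{m_{i}+m_{j}}{N}+\tfrac{2}{N}\!\!\sum_{x_{k}\text{ strictly between }x_{i},x_{j}}\!\!m_{k}\Bigr),
\]
which is strictly negative (resp.\ positive) when $z_{ij}>0$ (resp.\ $z_{ij}<0$). Let $t_{0}:=\inf\{t\geq 0:z_{ij}(t)=0\}$. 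If there existed $t_{1}>t_{0}$ with $z_{ij}(t_{1})>0$, then setting $t_{2}:=\sup\{t\in[t_{0},t_{1}]:z_{ij}(t)=0\}$ forces $z_{ij}>0$ on $(t_{2},t_{1}]$ by continuity, while the above formula gives $z_{ij}(t_{1})-z_{ij}(t_{2})<0$, contradicting $z_{ij}(t_{2})=0$. The case $z_{ij}(t_{1})<0$ is ruled out symmetrically. Therefore $z_{ij}\equiv 0$ on $[t_{0},\overline{T}]$ and collisions are irreversible.

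With stickiness established, there are at most $N-1$ cluster-merging times on $[0,\overline{T}]$, and between consecutive such times the cluster partition of the opinions is fixed. Intra-cluster $\sgn$ contributions vanish by the convention $\sgn(0)=0$, inter-cluster $\sgn$ contributions are locally constant, and the right-hand side of (\ref{opinion dynamics with kernel S}) is a continuous function of $t$ and Lipschitz in $\mathbf{m}_{N}$ on each such subinterval. The differential inclusion therefore reduces to a classical ODE, which admits a classical solution by Cauchy--Lipschitz, and concatenating the pieces produces an absolutely continuous curve satisfying (\ref{opinion dynamics with kernel S}) for a.e.\ $t\in[0,\overline{T}]$, i.e.\ a solution in the sense of Definition \ref{strong solution }. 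The main technical obstacle I anticipate is the rigorous justification of the sticky behaviour directly from the Filippov formulation rather than from a pre-existing smooth selection; the oddness of $S$ together with the multiplicative structure of the weight equation are precisely what keep the weights positive and bounded, allowing the construction to extend to arbitrary horizons $\overline{T}$.
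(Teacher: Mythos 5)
Your architecture is the same as the paper's: take the Filippov solution from Corollary \ref{Corollary }, observe that the weight components of the inclusion are single valued so the weights obey a genuine ODE, deduce conservation of total mass from the oddness of $S$ and positivity of the weights from the linear-in-$m_i$ structure (your Duhamel exponential replaces the paper's $\log(m_i^2+\varepsilon^2)$ regularization, to the same effect), then use the attractive sign structure together with positivity to show collisions are irreversible, and finally assemble a classical solution piecewise across at most $N$ merging events. Your pairwise computation for $\dot z_{ij}$ is exactly the estimate (\ref{eq:-5}) in the paper's Claim \ref{Particles stick together }.

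The one genuine gap is the step you yourself flag and do not resolve: the passage from the Filippov inclusion to the ODE with the convention $\sgn(0)=0$. Your identity for $\dot z_{ij}$ ``at any time where the pair is separated'' is not available as stated along a Filippov trajectory at instants when some \emph{other} opinion coincides with $x_i$ or $x_j$, because there $f$ is discontinuous and $\mathcal{F}[f]$ is multivalued; likewise, on an inter-merge subinterval the inclusion does \emph{not} ``reduce to a classical ODE'' at configurations where a cluster contains more than one opinion, since those configurations remain discontinuity points of the full field. Two repairs are possible. One is the paper's: at each collision time restart the inclusion with the reduced field $f^{1}$ in which collided pairs are removed from the $\sgn$ sums, so the relevant configurations are no longer discontinuity points, and then use stickiness (Claim \ref{Particles stick together }) to check that the reduced-field solution also solves the original system in the sense of Definition \ref{strong solution }. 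The other is to work directly with the inclusion: every limit point of $f$, hence every element of the convex hull, satisfies the one-sided bound $v_j-v_i\le -(m_i+m_j)/N$ whenever $x_i<x_j$ and the weights are positive, which is all your monotonicity argument needs for stickiness; and for stuck clusters one must additionally argue (e.g.\ via the weighted identity $\sum_{i\in\mathrm{cluster}} m_i\dot x_i$ combined with $\dot x_i=\dot x_j$ a.e.\ inside a cluster) that the selected velocity coincides with the $\sgn(0)=0$ value. As written, your proof asserts this bridging step rather than proving it; with either repair the argument closes and is essentially the paper's.
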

\textit{Proof}\textbf{. Step 1(a).} \textbf{Preservation of total
mass. }Let $\overline{T}>0$ and let $(\mathbf{x}_{N}(t),\mathbf{m}_{N}(t))$
be some solution on $[0,\overline{T}]$ given by Corollary \ref{Corollary }.
For each $1\leq i\leq2N$ let $\mathcal{\mathfrak{f}}_{i}:\mathbb{R}^{2N}\rightarrow\mathbb{R}^{2N}$
be the vector field whose $i$-th component identifies with the $i$-th
component of $f$ and is $0$ in all other components, so that $f=\stackrel[i=1]{2N}{\sum}\mathcal{\mathfrak{f}}_{i}$.
By subadditivity of $\mathcal{F}[f]$ we have 

\[
\mathcal{F}[f]\subset\stackrel[i=1]{2N}{\cup}\mathcal{F}[\mathcal{\mathfrak{f}}_{i}].
\]
Since the functions $\psi_{i}^{N}$ are continuous, $\mathcal{F}[\mathcal{\mathfrak{f}}_{i}]=\left\{ \mathcal{\mathfrak{f}}_{i}\right\} $
for each $N+1\leq i\leq2N$, so that the weights are governed by an
ODE, namely for a.e. $t\in[0,\overline{T}]$ one has 

\[
\dot{m}_{i}(t)=\psi_{i}^{N}(\mathbf{x}_{N}(t),\mathbf{m}_{N}(t)), \qquad m_{i}(0)=m_{i}^{0}.
\]
We claim that the total mass is preserved. Indeed, using that $S$ is odd, let us compute 
\begin{align*}
\frac{1}{N}\stackrel[k=1]{N}{\sum}\dot{m}_{k}(t) &= \frac{1}{N^{2}}\stackrel[k=1]{N}{\sum}\stackrel[j=1]{N}{\sum}m_{k}(t)m_{j}(t)S(x_{j}(t)-x_{k}(t)) =-\frac{1}{N^{2}}\stackrel[k=1]{N}{\sum}\stackrel[j=1]{N}{\sum}m_{k}(t)m_{j}(t)S(x_{k}(t)-x_{j}(t)) \\
&=-\frac{1}{N^{2}}\stackrel[j=1]{N}{\sum}m_{j}(t)\stackrel[k=1]{N}{\sum}m_{k}(t)S(x_{k}(t)-x_{j}(t)) =-\frac{1}{N}\stackrel[j=1]{N}{\sum}\dot{m}_{j}(t).
\end{align*}
As a result we get 
\[
\frac{d}{dt}\frac{1}{N}\stackrel[k=1]{N}{\sum}m_{k}(t)=0,
\]
so that 
\[
\frac{1}{N}\stackrel[k=1]{N}{\sum}m_{k}(t)=\frac{1}{N}\stackrel[k=1]{N}{\sum}m_{k}^{\mathrm{0}}=1.
\]

\textbf{Step 1(b).Weights remain positive}. We claim that $m_{i}(t)>0$ for all $t\in[0,\overline{T}]$.
Indeed using preservation of total mass we obtain

\[
\left|\frac{d}{dt}\frac{1}{2}\log\left(m_{i}^{2}+\varepsilon^{2}\right)\right|=\left|\frac{m_{i}(t)\dot{m_{i}}(t)}{m_{i}^{2}+\varepsilon^{2}}\right|=\left|\frac{m_{i}^{2}(t)}{(m_{i}^{2}(t)+\varepsilon^{2})N}\stackrel[j=1]{N}{\sum}m_{j}(t)S(x_{j}(t)-x_{i}(t))\right|\leq\left\Vert S\right\Vert _{\infty},
\]
hence 

\[
-\left\Vert S\right\Vert _{\infty}\leq\frac{1}{2}\frac{d}{dt}\log\left(m_{i}^{2}(t)+\varepsilon^{2}\right)\leq\left\Vert S\right\Vert _{\infty}.
\]
Integration in time yields the for all $t\in[0,\overline{T}]$ the
estimate

\[
-2\left\Vert S\right\Vert _{\infty}t+\log\left((m_{i}^{\mathrm{0}})^{2}+\varepsilon^{2}\right)\leq\log\left(m_{i}^{2}+\varepsilon^{2}\right)\leq 2\left\Vert S\right\Vert _{\infty}t+\log\left((m_{i}^{\mathrm{0}})^{2}+\varepsilon^{2}\right),
\]
and consequently 
\[
\left((m_{i}^{\mathrm{0}})^{2}+\varepsilon^{2}\right)\exp(-2\left\Vert S\right\Vert _{\infty}t)\leq m_{i}^{2}(t)+\varepsilon^{2}\leq\left((m_{i}^{\mathrm{0}})^{2}+\varepsilon^{2}\right)\exp(2\left\Vert S\right\Vert _{\infty}t).
\]
Letting $\varepsilon\rightarrow0$ gives 
\[
(m_{i}^{\mathrm{0}})^{2}\exp(-2\left\Vert S\right\Vert _{\infty}t)\leq m_{i}^{2}(t)\leq(m_{i}^{\mathrm{0}})^{2}\exp(2\left\Vert S\right\Vert _{\infty}t).
\]
In particular $m_{i}(t)$ does not vanish on $[0,\overline{T}]$,
which by continuity and the assumption $m_{i}^{\mathrm{in}}>0$ implies
$m_{i}(t)>0$ for all $t\in[0,\overline{T}]$ with the estimate 

\[
m_{i}^{\mathrm{0}}\exp(-\left\Vert S\right\Vert _{\infty}t)\leq m_{i}(t)\leq m_{i}^{\mathrm{0}}\exp(\left\Vert S\right\Vert _{\infty}t).
\]

\textbf{Step }2. \textbf{Construction of a classical solution on $[0,\overline{T}]$}.
The idea is to apply an iteration argument which terminates after
at most $N$ steps. This iteration proceeds as follows. The assumption
that $\forall i\neq j:x_{i}^{0}\neq x_{j}^{0}$ implies that $\forall i\neq j:x_{i}(t)\neq x_{j}(t)$
on some sufficiently (possibly $N$-dependent) short time interval
(by continuity). As a result the solution is classical on some short
time- by the same argument at the beginning of Step 1(a). Let 

\[
T_{1}^{\ast}=\sup\left\{ 0<T\leq\overline{T}\left|\forall t\in[0,T),\forall i\neq j:x_{i}(t)\neq x_{j}(t)\right.\right\} .
\]
If $T_{1}^{\ast}=\overline{T}$ we are done. If $T_{1}^{\ast}<\overline{T}$,
consider the set of all collisions at time $T_{1}^{\ast}$ with the
$i$-th particle, i.e. 

\[
J_{1}^{i}\coloneqq\left\{ j\neq i\left|x_{i}(T_{1}^{\ast})=x_{j}(T_{1}^{\ast})\right.\right\} ,
\]
and set

\[
f_{i}^{1}(\mathbf{x}_{N},\mathbf{m}_{N})\coloneqq\frac{1}{N}\underset{j\notin J_{1}^{i}}{\sum}m_{j}\mathrm{sgn}(x_{j}-x_{i}),
\]
and 
\[
f^{1}\coloneqq(f_{1}^{1},...,f_{N}^{1}).
\]
Obviously $J_{1}^{i}=J_{1}^{j}$ iff $j\in J_{1}^{i}$. From the same
consideration as before the solution of the differential inclusion
\[
\dot{\mathbf{x}}_{N,1}\in\mathcal{F}[f^{1}](\mathbf{x}_{N,1}(t)),\ \mathbf{x}_{N,1}(T_{1}^{\ast})=\mathbf{x}_{N}(T_{1}^{\ast})
\]
satisfies $x_{i}(t)\neq x_{j}(t)$ for all $1\leq i\leq N$ and $j\in\left[N\right]\setminus J_{1}^{i}$
with $i\neq j$, and is classical (both of these conclusions hold
on some short time of course). Let 
\[
T_{2}^{\ast}=\sup\left\{ T_{1}^{\ast}<T\leq\overline{T}\left|\forall t\in[T_{1}^{\ast},\overline{T}),\forall1\leq i\leq N,\forall j\in\left[N\right]\setminus J_{1}^{i},i\neq j:x_{i}(t)\neq x_{j}(t)\right.\right\} .
\]

If $T_{2}^{\ast}=\overline{T}$, then consider the curve 
\[
\mathbf{y}_{N}(t)=\left\{ \begin{array}{c}
\mathbf{x}_{N}(t),\ t\in[0,T_{1}^{\ast}]\\
\mathbf{x}_{N,1}(t)\ t\in(T_{1}^{\ast},\overline{T}]
\end{array}\right..
\]

In this case we make use of the attractive structure, which implies
that opinions which collided remain so, in order to show that $\mathbf{y}_{N}(t)$
is a solution:
\begin{claim}
\label{Particles stick together }The curve $(\mathbf{y}_{N}(t),\mathbf{m}_{N}(t))$
is a solution to the system (\ref{opinion dynamics with kernel S})
on $[0,\overline{T}]$. 
\end{claim}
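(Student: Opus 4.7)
The plan is to verify that $(\mathbf{y}_N,\mathbf{m}_N)$ fulfills Definition \ref{strong solution } on each of the three relevant regimes: on $[0,T_1^{\ast})$, on $(T_1^{\ast},\overline{T}]$, and across the gluing time $T_1^{\ast}$. On $[0,T_1^{\ast})$ the curve $\mathbf{y}_N=\mathbf{x}_N$ is classical by the argument already used at the start of Step~2 (opinions are pairwise distinct, so $f$ is continuous along the trajectory and the differential inclusion reduces to an ODE). On $(T_1^{\ast},\overline{T}]$, since $T_2^{\ast}=\overline{T}$, the quantities $x_j-x_i$ for $j\notin J_1^i$ remain strictly signed, so $f^1$ is continuous at $\mathbf{x}_{N,1}(t)$ and $\mathbf{x}_{N,1}$ solves $\dot{X}=f^1(X,\mathbf{m}_N)$ in the classical sense. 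Continuity at $T_1^{\ast}$ follows from the continuity of both pieces together with $\mathbf{x}_{N,1}(T_1^{\ast})=\mathbf{x}_N(T_1^{\ast})$, giving absolute continuity on $[0,\overline{T}]$.

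The crux is the sticking step: for $j\in J_1^i$ one must show $x_j(t)=x_i(t)$ for all $t\in[T_1^{\ast},\overline{T}]$. Using $J_1^i=C\setminus\{i\}$ where $C$ is the cluster containing $i$, and observing that the $j=i$ and $j'\in J_1^i$ contributions to $f_i^1$ either cancel or vanish at coincidence points (since $\sgn(0)=0$), one has for any $i,i'\in C$
\[
\frac{d}{dt}(x_i-x_{i'})^2 \;=\; \frac{2(x_i-x_{i'})}{N}\sum_{k\notin C} m_k\bigl[\sgn(x_k-x_i)-\sgn(x_k-x_{i'})\bigr].
\]
A short case analysis on the position of $x_k\notin C$ relative to $x_i,x_{i'}$ shows the bracket has sign opposite to $x_i-x_{i'}$ (or is zero), so combined with $m_k>0$ from Step~1(b), the right-hand side is non-positive. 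Since $(x_i-x_{i'})^2(T_1^{\ast})=0$, it vanishes on the whole interval, establishing the attractive stickiness.

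Once stickiness is in hand, the $\sgn$ terms dropped in passing from $f$ to $f^1$ identically vanish along $\mathbf{x}_{N,1}$, so
\[
f_i^1(\mathbf{x}_{N,1}(t),\mathbf{m}_N(t)) \;=\; \frac{1}{N}\sum_{\substack{1\leq j\leq N\\ j\neq i}} m_j(t)\,\sgn\!\bigl(x_j(t)-x_i(t)\bigr),
\]
and hence $\mathbf{y}_N$ satisfies the opinion ODE in (\ref{opinion dynamics with kernel S}) for a.e.\ $t\in[0,\overline{T}]$; the weight equation is preserved throughout from Step~1(a). The main obstacle is the attractive stickiness computation above, because $\sgn$ is discontinuous at the origin and one must justify the sign analysis of the relevant bracket carefully, using that the sum runs only over particles outside the cluster $C$ so that no discontinuity is evaluated at zero.
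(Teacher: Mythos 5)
Your proposal is correct and follows essentially the same route as the paper: the decisive ingredient is the identical sign computation $\dot{x}_i-\dot{x}_{i'}=\frac{1}{N}\sum_{k\notin C}m_k\bigl[\sgn(x_k-x_i)-\sgn(x_k-x_{i'})\bigr]$ together with positivity of the weights, which is exactly the paper's inequality (\ref{eq:-5}). The only difference is cosmetic: you deploy it as a direct monotonicity statement for $(x_i-x_{i'})^2$ starting from zero, whereas the paper phrases the same estimate as an argument by contradiction at a putative separation time; both then conclude identically that the dropped $\sgn$ terms vanish along the stuck trajectory, so $\mathbf{y}_N$ solves the full system.
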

\textit{Proof}. The curve $\mathbf{y}_{N}(t)$ solves the Equation
(\ref{opinion dynamics with kernel S}) on $[0,T_{1}^{\ast}]$. To
see that $\mathbf{y}_{N}(t)$ solves the equation on $(T_{1}^{\ast},\overline{T}]$
we need to explain why particles that collided at time $T_{1}^{\ast}$
remain collided for all $t>T_{1}^{\ast}$. Indeed, suppose on the
contrary that $x_{i_{0}}(T_{1}^{\ast})=x_{j_{0}}(T_{1}^{\ast})$ for
some $i_{0}\neq j_{0}$, but 
\[
\tau\coloneqq\mathrm{inf}\left\{ T_{1}^{\ast}<t\leq\overline{T}\left|x_{i_{0}}(t)\neq x_{j_{0}}(t)\right.\right\} >0,
\]
so that $x_{i_{0}}(\tau)=x_{j_{0}}(\tau)$. We may assume with no
loss of generality that $x_{i_{0}}(t)-x_{j_{0}}(t)>0$ on some sufficiently
small time interval $(\underline{\tau},\overline{\tau})\subset(\tau,\overline{T}]$.
Let 

\[
\tau_{\ast}\coloneqq\inf\left\{ \left.\tau<\tau'<\underline{\tau}\right|\forall t\in(\tau',\overline{\tau}):x_{i_{0}}(t)-x_{j_{0}}(t)>0\right\} ,
\]
so that $x_{i_{0}}(t)-x_{j_{0}}(t)>0$ for all $t\in(\tau_{\ast},\overline{\tau})$
and $x_{i_{0}}(\tau_{\ast})=x_{j_{0}}(\tau_{\ast})$. On the other
hand, since $J_{1}^{i_{0}}=J_{1}^{j_{0}}$, for a.e. $t\in(\tau_{\ast},\overline{\tau})$
we compute that 
\begin{equation}\label{eq:-5}
\begin{aligned}
\dot{x}_{i_{0}}(t)-\dot{x}_{j_{0}}(t) &= \frac{1}{N}\underset{j\notin J_{1}^{i_{0}}}{\sum}m_{j}(t)(\mathrm{sgn}(x_{j}(t)-x_{i_{0}}(t))-\mathrm{sgn}(x_{j}(t)-x_{j_{0}}(t))) \\
&= \frac{1}{N}\underset{j\notin J_{1}^{i_{0}}:x_{i_{0}}(t)\geq x_{j}\geq x_{j_{0}}(t)}{\sum}m_{j}(t)(\mathrm{sgn}(x_{j}-x_{i_{0}})-\mathrm{sgn}(x_{j}-x_{j_{0}}))\leq0,
\end{aligned}
\end{equation}
which implies that for all $t\in(\tau_{\ast},\overline{\tau})$

\[
x_{i_{0}}(t)-x_{j_{0}}(t)\leq x_{i_{0}}(\tau_{\ast})-x_{j_{0}}(\tau_{\ast})=0,
\]
which is absurd. Remark that Inequality (\ref{eq:-5}) is thanks to
the fact the the weights are positive (step 1(b)). Therefore $x_{i_{0}}(t)=x_{j_{0}}(t)$
for all $t>T_{1}^{\ast}$ which shows that for all $T_{1}^{\ast}<t\leq\overline{T}$
it holds that
\[
\frac{1}{N}\underset{j\notin J_{1}^{i_{0}}}{\sum}\mathrm{sgn}(x_{j}(t)-x_{i}(t))=\frac{1}{N}\stackrel[j=1]{N}{\sum}\mathrm{sgn}(x_{j}(t)-x_{i}(t)),
\]
so that $\mathbf{y}_{N}(t)$ is a solution to the system (\ref{eq:OPINION DYNAMICS})
on $(T_{1}^{\ast},\overline{T}]$ as well. 
\begin{flushright}
$\square$
\par\end{flushright}

If $T_{2}^{\ast}<\overline{T}$, then we continue according to the
algorithm described above, which must terminate after at most $N$
steps, thereby yielding an absolutely continuous curve $\mathbf{y}_{N}(t)$
on $[0,\overline{T}]$. The same argument demonstrated in Claim (\ref{Particles stick together })
shows that $\mathbf{y}_{N}(t)$ is a solution to the system (\ref{opinion dynamics with kernel S})
. 
\begin{flushright}
$\square$
\par\end{flushright}

We finish this section by observing a few elementary properties of
solutions to (\ref{opinion dynamics with kernel S}). First we stress
that \textit{any }classical solution on some given time interval to
the system (\ref{opinion dynamics with kernel S}) must satisfy
the ``sticky opinions property'', namely that opinions that collide
stay collided. As a result there exist a finite partition of the time
interval into sub-intervals on each of which no new collisions occur. Moreover, opinions preserve the initial ordering. This is summarized in the following 
\begin{prop}
\label{prop:} Let the assumptions of Proposition \ref{Existence } hold. Let $(\mathbf{x}_{N}(t),\mathbf{m}_{N}(t))$ be a solution
to system (\ref{opinion dynamics with kernel S}) (in the sense of
Definition (\ref{strong solution })) on some time interval $[0,\overline{T}]$.
There exist $0=T_{0}<T_{1}<...<T_{k-1}<T_{k}=\overline{T}$ ($1\leq k\leq N$) such that for any
given $1\leq l\leq k$ it holds that for any given $1\leq i,j\leq N$ either
$\forall t\in[T_{l-1},T_{l}):x_{i}(t)=x_{j}(t)$ or $\forall t\in[T_{l-1},T_{l}):x_{i}(t)\neq x_{j}(t)$. Moreover, $x_{1}(t)\leq...\leq x_{N}(t),\ t\in[0,\overline{T}]$.
\end{prop}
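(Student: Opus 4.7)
The plan is to reduce everything to a clean "sticky opinions" lemma and then derive both the partition and the ordering as consequences. I would proceed as follows.

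First I would establish the sticky opinions property for any classical solution: if $x_{i_0}(\tau) = x_{j_0}(\tau)$ for some $\tau \in [0, \overline{T})$, then $x_{i_0}(t) = x_{j_0}(t)$ for all $t \in [\tau, \overline{T}]$. The argument is essentially the one in Claim \ref{Particles stick together }, but now abstracted from the construction: assume for contradiction there is some later time at which the two trajectories separate; by continuity there is a maximal interval $(\tau_*, \overline{\tau})$ on which (WLOG) $x_{i_0}(t) > x_{j_0}(t)$ with $x_{i_0}(\tau_*) = x_{j_0}(\tau_*)$. On this interval, computing as in \eqref{eq:-5}, only indices $j$ with $x_{j_0}(t) \leq x_j(t) \leq x_{i_0}(t)$ (including $j = i_0, j_0$ themselves) contribute to $\dot{x}_{i_0}(t) - \dot{x}_{j_0}(t)$, and each such summand is nonpositive because the weights $m_j(t)$ are positive (Step 1(b) of Proposition \ref{Existence }). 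Integrating yields $x_{i_0}(t) \leq x_{j_0}(t)$ on $(\tau_*, \overline{\tau})$, a contradiction.

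Next I would produce the finite partition. Define $n(t) := \#\{x_i(t) : 1 \leq i \leq N\}$, the number of distinct opinion values at time $t$. By the sticky property, whenever two indices coalesce they remain coalesced, so the equivalence relation "$x_i(t) = x_j(t)$" on $\{1, \ldots, N\}$ gets coarser with time, and consequently $n$ is a nonincreasing integer-valued function. Since $n(0) = N$ by assumption \eqref{eq:seperation} and $n(t) \geq 1$, the function $n$ has at most $N-1$ jump discontinuities; let these be $T_1 < \cdots < T_{k-1}$ and set $T_0 := 0$, $T_k := \overline{T}$ (with $k \leq N$). On each subinterval $[T_{l-1}, T_l)$ the partition into clusters stays constant, so for every pair $(i,j)$ we have either $x_i(t) = x_j(t)$ for all $t \in [T_{l-1}, T_l)$ or $x_i(t) \neq x_j(t)$ for all $t \in [T_{l-1}, T_l)$.

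Finally, order preservation follows from stickiness combined with continuity. Since $x_1(0) < \cdots < x_N(0)$, the set of times $t \in [0, \overline{T}]$ on which $x_1(t) \leq \cdots \leq x_N(t)$ holds is nonempty and (by continuity of the trajectories) closed. If it fails to be all of $[0, \overline{T}]$, then there exists a first time $\sigma$ and indices $i < j$ with $x_i(\sigma) = x_j(\sigma)$ such that $x_i(t) > x_j(t)$ for $t$ in some right neighborhood of $\sigma$; this is exactly what the sticky property rules out. Hence $x_1(t) \leq \cdots \leq x_N(t)$ on $[0, \overline{T}]$.

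The only step with any real content is the sticky opinions property, and the main technical delicacy there is the bookkeeping of which indices contribute to the difference $\dot{x}_{i_0}(t) - \dot{x}_{j_0}(t)$ and the use of positivity of the weights (which, importantly, we have already secured in the proof of Proposition \ref{Existence }). The passage to the partition and to the ordering is then essentially combinatorial.
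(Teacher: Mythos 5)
Your proposal is correct and follows essentially the route the paper intends: the paper omits the proof of this proposition, remarking that it is implicit in the proof of Proposition \ref{Existence }, and your three steps (the sticky-opinions computation as in Claim \ref{Particles stick together }, which uses positivity of the weights; the finitely many coalescence times; and order preservation by continuity) are precisely that implicit argument made explicit for an arbitrary classical solution. The only cosmetic imprecision is the phrase ``$x_i(t)>x_j(t)$ for $t$ in some right neighborhood of $\sigma$'' in the ordering step---in general one only gets a sequence of times accumulating at $\sigma$---but since stickiness forbids even a single later separation after $x_i(\sigma)=x_j(\sigma)$, the argument stands as written.
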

A careful examination of the argument in Proposition \ref{Existence }
reveals that the proof of Proposition \ref{prop:} is in fact implicitly
included in that of Proposition \ref{Existence }, and is therefore
omitted. This is just to clarify that the ''sticky opinions property''
is not some kind of an extra assumption. In addition, for the mean
field limit it is important to observe that 
\begin{equation}
F_{N}(t,x)\coloneqq-\frac{1}{2}+\frac{1}{N}\stackrel[k=1]{N}{\sum}m_{k}(t)H(x-x_{k}(t)),\label{eq:FN def section 2}
\end{equation}
becomes constant for any $x$ outside some interval, which is a consequence
of preservation of total mass- this is precisely the place where we
use the assumption that $S$ is odd. This would allow to obtain stability
estimates globally in $L^{1}$ (rather only in $L_{\mathrm{loc}}^{1}$)
and is recorded in the following simple 
\begin{lem}
\label{Long time constant } Let the assumptions of Proposition \ref{Existence } hold. Suppose also there is some $\overline{X}$
such that for all $N\in\mathbb{N}$ and $1\leq i\leq N$ it holds
that $\left|x_{i}^{\mathrm{0}}\right|\leq\overline{X}$. Then,
there exist some $\overline{R}=\overline{R}(\left\Vert S\right\Vert _{\infty},\overline{X},\overline{T})>0$ such
that for any $F_{N}(t,\pm x)=\pm\frac{1}{2}$ for all $t\in[0,\overline{T}]$
and $\left|x\right|>\overline{R}$.
\end{lem}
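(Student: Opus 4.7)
The plan is to prove this by first obtaining a uniform-in-$N$, uniform-in-$t$ bound on the trajectories $x_i(t)$, and then exploiting preservation of total mass (Step 1(a) of Proposition \ref{Existence }) to evaluate $F_N$ explicitly outside the region covered by the particles.

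First I would bound the velocities. Using $|\sgn|\le 1$ in the first equation of (\ref{opinion dynamics with kernel S}), together with the positivity of the weights established in Step 1(b) and the preservation of total mass from Step 1(a), one gets for a.e.\ $t\in[0,\overline{T}]$ the pointwise estimate
\[
|\dot{x}_i^N(t)| \le \frac{1}{N}\sum_{j=1}^N m_j^N(t) \, |\sgn(x_j-x_i)| \le \frac{1}{N}\sum_{j=1}^N m_j^N(t) = 1.
\]
Integrating in time and using the hypothesis $|x_i^{\mathrm{in}}|\le \overline{X}$ yields $|x_i^N(t)|\le \overline{X}+\overline{T}$ for all $1\le i\le N$ and $t\in[0,\overline{T}]$.

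Next I would choose $\overline{R}$ slightly larger than this bound, say $\overline{R}\coloneqq \overline{X}+\overline{T}+1$. If $x>\overline{R}$ then $x>x_k^N(t)$ for every $k$, so $H(x-x_k^N(t))=1$ and, again by preservation of total mass,
\[
F_N(t,x)= -\frac{1}{2}+\frac{1}{N}\sum_{k=1}^N m_k^N(t) = -\frac{1}{2}+1 = \frac{1}{2}.
\]
Symmetrically, if $x<-\overline{R}$ then $x<x_k^N(t)$ for every $k$, so $H(x-x_k^N(t))=0$ and $F_N(t,x)=-\tfrac12$.

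There is no real obstacle here: the statement is a direct consequence of the two properties already obtained in the proof of Proposition \ref{Existence }, namely positivity of the weights and preservation of total mass (the latter being where the oddness of $S$ enters). The only minor point is that positivity of $m_j^N(t)$ is needed to replace $|\tfrac1N\sum_j m_j^N(t)|$ by $\tfrac1N\sum_j m_j^N(t)=1$ in the velocity estimate. Note that with this argument $\overline{R}$ actually only depends on $\overline{X}$ and $\overline{T}$; if one preferred to bound $|\dot x_i|$ via the cruder estimate $m_j^N(t)\le m_j^{\mathrm{in}}\exp(\|S\|_\infty t)$ from Step 1(b) rather than by the total-mass identity, the dependence on $\|S\|_\infty$ stated in the lemma naturally appears.
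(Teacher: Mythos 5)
Your proposal is correct and follows essentially the same argument as the paper: bound the trajectories uniformly in $N$ and $t$ via the velocity equation, then evaluate $F_N$ explicitly outside that region using preservation of total mass. The only (harmless) difference is that you use the exact mass identity to get $|\dot x_i|\le 1$, so your $\overline{R}$ depends only on $\overline{X},\overline{T}$, whereas the paper bounds the masses by a constant $\overline{M}(\|S\|_\infty,\overline{T})$ from Step 1, which is where the stated $\|S\|_\infty$-dependence comes from.
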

\begin{proof}
Thanks to the equation for the opinions, the bound for the masses
in step 1 of Proposition \ref{Existence } and the assumption on the
initial opinions, we have 

\[
\left|x_{i}(t)\right|\leq\left|x_{i}^{\mathrm{0}}\right|+\frac{1}{N}\underset{1\leq j\leq N:j\neq i}{\sum}\int^t_{0}\left|m_{j}(\tau)\right|d\tau\leq\overline{X}+\overline{M}\overline{T},
\]
for some $\overline{M}=\overline{M}(\left\Vert S\right\Vert _{\infty},\overline{T})$.
Set $\overline{R}=\overline{R}(\overline{T},\overline{X},\overline{M})\coloneqq\overline{X}+\overline{M}\overline{T}$.
Then according, for all $x\geq\overline{R}$ it holds that 

\[
H(x-x_{k}(t))=1
\]
while for all $x<-\overline{R}$ it holds that 

\[
H(x-x_{k}(t))=0.
\]
Keeping in mind step 1 of Proposition \ref{Existence } we conclude
that for all $x>R$ and $t\in[0,\overline{T}]$ 

\[
F_{N}(t,x)=-\frac{1}{2}+\frac{1}{N}\sum_{k=1}^N m_{k}(t)=-\frac{1}{2}+1=\frac{1}{2},
\]
while for all $x<-\overline{R}$ and $t\in[0,\overline{T}]$ 

\[
F_{N}(t,x)=-\frac{1}{2}+0=-\frac{1}{2}.
\]
\end{proof}

\section{\label{sec:The-Discretized-Version}The discretized version of the
Burgers-type equation }

In this section we construct a discretization of the flux by means
of the weights, and then show that $F_{N}$ is an entropy solution
for the corresponding Burgers like equation-- that is equation (\ref{eq:-3}),
with the only difference being that the flux $A$ is replaced by a
discretized approximation thereof. As usual, we assume that the opinion are
ordered increasingly, i.e., 

\[
x_{1}^{0}<x_{2}^{0}<\cdots<x_{N}^{0}.
\]
Throughout this section we always work under the assumptions of Proposition \ref{Existence } and take  $t\mapsto(\mathbf{x}_{N}(t),\boldsymbol{\mathbf{m}}_{N}(t))$ to be a solution of the system \ref{opinion dynamics with kernel S}. For each $0\leq i\leq N$ we set 

\[
\theta_{i}(t)\coloneqq\left\{ \begin{array}{c}
-\frac{1}{2}+\frac{1}{N}\stackrel[j=1]{i}{\sum}m_{j}(t),\ \ 1\leq i\leq N,\\
-\frac{1}{2},\ \ i=0.
\end{array}\right.
\]
Note that since the weights are positive for all times (step 1 in
Proposition \ref{Existence } ) the $\theta_{i}$ are ordered increasingly
\[
-\frac{1}{2}\equiv\theta_{0}(t)<\cdots<\theta_{N}(t)\equiv\frac{1}{2}.
\]
The discretized flux, denoted $A_{N}(t,x)$, is defined for each $t$
to be the (unique) continuous, piecewise linear function with break
points only at $\left(\theta_{i}(t)\right)_{i=1}^{N-1}$ such that
$A(t,\theta_{i}(t))=A(\theta_{i}(t))$. In other words for each $x\in(\theta_{i-1}(t),\theta_{i}(t))$

\[
A_{N}(t,x)\coloneqq\left(\frac{A(\theta_{i}(t))-A(\theta_{i-1}(t))}{\theta_{i}(t)-\theta_{i-1}(t)}\right)(x-\theta_{i}(t))+A(\theta_{i}(t)).
\]
Although the $A_{N}$ are time dependent, the following simple lemma
shows that they approximate the time independent flux $A$.
\begin{lem}
\label{AN approximates A} For each $t\in[0,T]$ and each $x\in[-\frac{1}{2},\frac{1}{2}]$
it holds that 

\[
\left|A_{N}(t,x)-A(x)\right|\leq\frac{2\overline{M}\mathrm{Lip}(\left.A\right|_{[-\frac{1}{2},\frac{1}{2}]})}{N}.
\]
where $\overline{M}$ is as in Lemma \ref{Long time constant }.
\end{lem}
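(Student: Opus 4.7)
The plan is a standard piecewise-linear interpolation error estimate combined with a mesh-size bound coming from the weight estimates in Proposition \ref{Existence }. The statement should follow without any essentially new ingredient: both the Lipschitz bound on $A(x)=-x^{2}$ restricted to $[-\tfrac12,\tfrac12]$ and the upper bound on the weights are already available.

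First I would fix $t\in[0,\overline{T}]$ and $x\in[-\tfrac12,\tfrac12]$, locate the unique index $i$ with $x\in[\theta_{i-1}(t),\theta_{i}(t)]$, and write $A_N(t,x)$ in the secant form
\[
A_N(t,x)=A(\theta_{i-1}(t))+\frac{A(\theta_{i}(t))-A(\theta_{i-1}(t))}{\theta_{i}(t)-\theta_{i-1}(t)}\bigl(x-\theta_{i-1}(t)\bigr).
\]
Subtracting $A(x)=A(\theta_{i-1}(t))+(A(x)-A(\theta_{i-1}(t)))$ and applying the triangle inequality yields
\[
|A(x)-A_N(t,x)|\le|A(x)-A(\theta_{i-1}(t))|+\mathrm{Lip}(A|_{[-\frac12,\frac12]})\,(x-\theta_{i-1}(t))\le 2\,\mathrm{Lip}(A|_{[-\frac12,\frac12]})\,\bigl(\theta_{i}(t)-\theta_{i-1}(t)\bigr),
\]
since $\theta_{i-1},\theta_i\in[-\tfrac12,\tfrac12]$ so that the secant slope is bounded in absolute value by $\mathrm{Lip}(A|_{[-\frac12,\frac12]})$.

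Second, I would control the mesh size. By definition $\theta_i(t)-\theta_{i-1}(t)=m_i(t)/N$, and the a priori bound for the weights established in Step 1(b) of Proposition \ref{Existence } (together with the uniform initial bound assumed in Lemma \ref{Long time constant }) provides a constant $\overline{M}=\overline{M}(\|S\|_\infty,\overline{T})$ such that $m_i(t)\le\overline{M}$ uniformly in $i$, $N$ and $t\in[0,\overline{T}]$. Hence $\theta_i(t)-\theta_{i-1}(t)\le\overline{M}/N$, and combining with the previous display gives exactly
\[
|A_N(t,x)-A(x)|\le\frac{2\overline{M}\,\mathrm{Lip}(A|_{[-\frac12,\frac12]})}{N}.
\]

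There is essentially no obstacle here: the only thing to be careful about is that the same $\overline{M}$ employed in Lemma \ref{Long time constant } really bounds the individual weights $m_i(t)$ (as opposed to, say, their sum), which is precisely what the exponential estimate from Step 1(b) delivers. Everything else is the textbook linear-interpolation bound.
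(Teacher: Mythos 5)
Your proposal is correct and follows essentially the same route as the paper: the standard secant/linear-interpolation error bound via the Lipschitz constant of $A$ on $[-\tfrac12,\tfrac12]$, combined with the mesh estimate $\theta_i(t)-\theta_{i-1}(t)=m_i(t)/N\le\overline{M}/N$ coming from the weight bounds (your anchoring at $\theta_{i-1}$ instead of $\theta_i$ is immaterial). Your closing caveat that $\overline{M}$ must bound the individual weights $m_i(t)$ is exactly the point the paper also relies on implicitly.
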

\begin{proof}
Keep $t\in[0,T]$ fixed. Let $x\in[-\frac{1}{2},\frac{1}{2}]$, and
pick $1\leq i\leq N$ such that $x\in[\theta_{i-1}(t),\theta_{i}(t)]$.
By the above formula for $A_{N}$ we get 

\[
\left|A_{N}(t,x)-A(x)\right|\leq\left|\frac{A(\theta_{i}(t))-A(\theta_{i-1}(t))}{\theta_{i-1}(t)-\theta_{i}(t)}\left(x-\theta_{i}(t)\right)\right|+\left|A(\theta_{i}(t))-A(x)\right|.
\]
Clearly 
\[
\left|\frac{x-\theta_{i}(t)}{\theta_{i-1}(t)-\theta_{i}(t)}\right|\leq1,
\]
so that 
\begin{align*}
\left|A_{N}(t,x)-A(x)\right|&\leq\left|A(\theta_{i}(t))-A(\theta_{i-1}(t))\right|+\left|A(\theta_{i}(t))-A(x)\right|
\\
&\leq2\mathrm{Lip}(\left.A\right|_{[-\frac{1}{2},\frac{1}{2}]})\left|\theta_{i}(t)-\theta_{i-1}(t)\right|\leq\frac{2\overline{M}\mathrm{Lip}(\left.A\right|_{[-\frac{1}{2},\frac{1}{2}]})}{N}.
\end{align*}
\end{proof}
The definition of entropy solutions originated in the celebrated work
\cite{12}. We recall the notion of entropy solution for conservation
laws with non-local source terms, which will be the the notion of
solution that we will use for what concerns the mean field limit and
the well posedness for the Cauchy problem of the Burgers equation. 
\begin{defn}
\label{definition of kruzkov solution-1} Let $A\in C([0,T];\mathrm{Lip}[-\frac{1}{2},\frac{1}{2}])$.
A function $F\in BV([0,T]\times\mathbb{R})$ is called an entropy
solution of the equation 

\[
\partial_{t}F+\partial_{x}(A(t,F))=\mathbf{S}[F](t,x)
\]
with initial data $F^{\mathrm{0}}\in BV(\mathbb{R})$
if 

(1) $\forall t\in[0,T],\forall x\geq R:F(t,\pm x)=\pm\frac{1}{2}$.

(2) The map $x\mapsto F(t,x)$ is non-decreasing for any $t\in[0,T]$. 

(3) $F(t, \cdot)$ converges to $F^{\mathrm{0}}$ as $t\to 0^+$ in the sense of distributions.

(4) For any $\chi\in C_{0}^{\infty}((0,T)\times\mathbb{R})$ and any
$\alpha\in\mathbb{R}$ it holds that 

\begin{equation}
\int_0^T\int_\mathbb{R}\left(\left|F-\alpha\right|\chi_{t}+\mathrm{sgn}(F-\alpha)\left(A(F)-A(\alpha)\right)\partial_{x}\chi+\mathrm{sgn}(F-\alpha)\chi\mathbf{S}[F](t,x)\right)dxdt\geq0.\label{entropy inequality}
\end{equation}
\end{defn}
\begin{rem}
A particular byproduct of the requirements (1) and (2) is that for any
$t\in[0,T]$ the total variation $\left|\partial_{x}F(t,\cdot)\right|$
is a probability measure. 
\end{rem}
\begin{rem}
We emphasize that we can keep the classical definition of
the convolution for the term  $\mathbf{S}[F]$ unlike in \cite{11}. That is, if $\mathrm{supp}(S)\subset[-r,r]$
we take $R=\max\left\{ r,\overline{R}\right\} $($\overline{R}$ as
in Lemma \ref{Long time constant }) and realizing that 

\[
\phi\star F(t,z)\coloneqq \int_{-2R}^{2R} \phi(z-\zeta)F(t,\zeta)d\zeta,
\]
due to the definition of $S\star F$, $\phi=\partial_z S$, and the fact that $F$ is constant outside the interval $[-2R,2R]$.
It is now evident that $\phi\star F,\partial_{z}\phi\star F\in C_{0}([0,T]\times\mathbb{R})$,
which enables to make sense of the last term in the left hand side
of inequality (\ref{entropy inequality}). 
\end{rem}
Next we claim that $F_{N}$ is an entropy solution to our Burgers
equation but with a discretized flux,. i.e. the equation 

\begin{equation}
\partial_{t}F_{N}+\partial_{x}(A_{N}(t,F_{N}))=\mathbf{S}[F_{N}](t,x).\label{discretized Burgers}
\end{equation}
To verify this we need to check that

i. $F_{N}$ is a classical solution on finitely many regions which
form a disjoint partition of the whole domain $(0,T)\times\mathbb{R}$ 

ii. The Rankine-Hugoniot condition 

iii. The Oleinik conditions. 

It is classical that the verification of i-iii imply the the integral
inequality (\ref{entropy inequality})- see Appendix \ref{Appendix A}
for more details about this implication. We start by verifying point
1. Let us recall that by Proposition \ref{prop:} we know there exist
finitely many times $0=T_{0}<T_{1}<...<T_{k-1}<T_{k}=T$ such that
on each $(T_{j-1},T_{j})$ collision does not occur. More specifically,
we know that for each $1\leq j\leq k$ there is a disjoint partition
of $\left[N\right]$ into subsets $I_{1}^{j},...,I_{m_{j}}^{j}\subset\left[N\right]$
(in brief $\stackrel[i=1]{m_{j}}{\sqcup}I_{i}^{j}=\left[N\right]$)
such that: 

(a) Given $1\leq i\leq m_{j}$ it holds that $x_{\alpha}(t)=x_{\beta}(t)$
for each $\alpha,\beta\in I_{i}^{j}$ and each $t\in[T_{j-1},T_{j})$. 

(b) For each $1\leq i<i'\leq m_{j}$ it holds that $x_{\alpha}(t)<x_{\beta}(t)$
for each $\alpha\in I_{i}^{j},\beta\in I_{i'}^{j}$ and each $t\in[T_{j-1},T_{j})$. 

For each $1\leq j\leq k$ and $1\leq i\leq m_{j}$ we let $i^{\ast}(j)$
be the maximal index inside $I_{i}^{j}$, i.e. 

\[
i^{\ast}(j)\coloneqq\max\left\{ r\left|r\in I_{i}^{j}\right.\right\} .
\]
With this notation we have the following simple 

\begin{lem}
\label{FN is classical solution on a partition } Let $F_{N}$ be
given by (\ref{eq:FN def section 2}).For each $1\leq j\leq k$ and
$1\leq i\leq m_{j}$ set 
\[
V_{L}^{i,j}\coloneqq\left\{ (t,x)\left|t\in[T_{j-1},T_{j}),x_{(i-1)^{\ast}(j)}(t)\leq x<x_{i^{\ast}(j)}(t)\right.\right\} 
\]
 and 
\[
V_{R}^{i,j}\coloneqq\left\{ (t,x)\left|t\in[T_{j-1},T_{j}),x_{i^{\ast}(j)}(t)<x\leq x_{(i+1)^{\ast}(j)}(t)\right.\right\} .
\]
(with the convention $x_{0^{\ast}}=-\infty$ and $x_{(m_{j}+1)^{\ast}}=+\infty$.
Then, for each $1\leq j\leq k$ and $1\leq i\leq m_{j}$ , $F_{N}$
is a classical solution on both $V_{L}^{i,j}$ and $V_{R}^{i,j}$.
\end{lem}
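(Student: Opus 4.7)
The key observation is that on each of the regions $V_{L}^{i,j}$ and $V_{R}^{i,j}$, the step function $F_N$ is actually constant in $x$, so the equation degenerates to a purely temporal ODE. Indeed, Proposition~\ref{prop:} tells us that on $[T_{j-1},T_j)$ the cluster partition $I_1^j,\ldots,I_{m_j}^j$ is frozen and the strict inter-cluster ordering is preserved. Hence, for $(t,x)$ in the interior of $V_L^{i,j}$ we have $x_{(i-1)^{\ast}(j)}(t)<x<x_{i^{\ast}(j)}(t)$, and $H(x-x_k(t))=1$ precisely for $k\leq (i-1)^{\ast}(j)$. Substituting this into \eqref{eq:FN def section 2} yields
\[
F_N(t,x) = -\frac{1}{2}+\frac{1}{N}\sum_{k=1}^{(i-1)^{\ast}(j)} m_k(t) = \theta_{(i-1)^{\ast}(j)}(t),
\]
and the same argument gives $F_N(t,x)=\theta_{i^{\ast}(j)}(t)$ on the interior of $V_R^{i,j}$.

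Because $F_N(t,\cdot)$ is constant on each such region, $\partial_x F_N$ vanishes there, and consequently so does $\partial_x A_N(t,F_N(t,x))$; note that although $\theta_{(i-1)^{\ast}(j)}(t)$ coincides with a break point of the piecewise linear map $A_N(t,\cdot)$, the fact that $A_N(t,\cdot)$ is continuous at its break points by construction means $A_N(t,F_N)$ is still well defined and $x$-constant. A parallel computation from the formula $\mathbf{S}[F]=F(\phi\star F)-\int_{-\infty}^x F(\phi'\star F)\,dz$ gives $\partial_x\mathbf{S}[F_N]=(\partial_x F_N)(\phi\star F_N)\equiv 0$ on the interior, so the source term is also $x$-constant. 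Equation \eqref{discretized Burgers} therefore collapses, on each interior region, to the pointwise temporal identity
\[
\dot\theta_{(i-1)^{\ast}(j)}(t) = \mathbf{S}[F_N](t,x_0)
\]
at any fixed $x_0$ in the region (and analogously for $V_R^{i,j}$).

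To verify this identity, I would compute the left-hand side directly from the weight ODE in \eqref{opinion dynamics with kernel S},
\[
\dot\theta_{(i-1)^{\ast}(j)}(t) = \frac{1}{N}\sum_{k=1}^{(i-1)^{\ast}(j)}\dot m_k(t) = \frac{1}{N^2}\sum_{k=1}^{(i-1)^{\ast}(j)}\sum_{\ell=1}^{N} m_k m_\ell S(x_\ell-x_k),
\]
and reconcile it with the right-hand side via the motivating representation $\mathbf{S}[F_N](t,x_0)=\int_{-\infty}^{x_0}\partial_z F_N(z)(S\star\partial_z F_N)(z)\,dz$. Since $\partial_z F_N=\frac{1}{N}\sum_k m_k\delta_{x_k(t)}$ as a measure and $S\star\partial_z F_N$ is smooth away from the particle locations, the integral collapses to a finite sum over those indices $k$ with $x_k(t)<x_0$, producing a double sum which matches the previous display after applying the oddness of $S$. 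The case of $V_R^{i,j}$ is handled identically, with $(i-1)^{\ast}(j)$ replaced by $i^{\ast}(j)$.

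The main obstacle is not conceptual but organizational: one must carefully justify evaluating the distributional integral $\int_{-\infty}^{x_0}\partial_z F_N\cdot(S\star\partial_z F_N)\,dz$ as a finite sum of point values, and align the sign conventions so that the oddness of $S$ delivers the correct cancellation. A cleaner alternative that avoids distributional products altogether is to work with the classical representation $\mathbf{S}[F_N]=F_N(\phi\star F_N)-\int_{-\infty}^x F_N(\phi'\star F_N)\,dz$, which is well defined since $F_N\in L^\infty$ and $\phi,\phi'\in C_0^\infty$, and to match it against $\dot\theta_{(i-1)^{\ast}(j)}(t)$ via a discrete integration by parts on $[-\infty,x_0]$ partitioned by the particle positions.
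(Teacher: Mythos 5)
Your proposal is correct and follows essentially the same route as the paper: observe that $F_N$ is constant in $x$ on each region, equal to $\theta_{(i-1)^{\ast}(j)}(t)$ resp.\ $\theta_{i^{\ast}(j)}(t)$, so the flux term contributes nothing, and then verify $\partial_t F_N=\mathbf{S}[F_N]$ by computing both sides as the same discrete double sum $\frac{1}{N^{2}}\sum_{l,r}H(x-x_l(t))m_l(t)m_r(t)S(x_l(t)-x_r(t))$, the left side via the weight ODE and the right side by evaluating the nonlocal source on the step function. The paper carries out exactly this "routine calculation" (using the classical representation of $\mathbf{S}[F_N]$, so no distributional product and no appeal to the oddness of $S$ is needed at this stage), so your two proposed ways of evaluating $\mathbf{S}[F_N]$ are just bookkeeping variants of the same argument.
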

\begin{proof}
For any $(t,x)\in V_{L}^{i,j}$ we have 

\[
F_{N}(t,x)=-\frac{1}{2}+\frac{1}{N}\stackrel[r=1]{(i-1)^{\ast}(j)}{\sum}m_{r}(t)=\theta_{(i-1)^{\ast}(j)}(t)
\]
and for any $(t,x)\in V_{R}^{i,j}$ we have 

\[
F_{N}(t,x)=-\frac{1}{2}+\frac{1}{N}\stackrel[r=1]{i^{\ast}(j)}{\sum}m_{r}(t)=\theta_{i^{\ast}(j)}(t).
\]
 In particular, note that $A_{N}(t,F_{N}(t,x))$ is constant in $x$
on both regions, so that its $x$-derivative vanishes. In addition
for each $x_{(i-1)^{\ast}(j)}(t)\leq x<x_{i^{\ast}(j)}(t)$ and $t\in[T_{j-1},T_{j})$
a routine calculation shows that 
\begin{align*}
    \mathbf{S}[F_{N}](t,x)&=\frac{1}{N^{2}}\underset{l,r}{\sum}H(x-x_{l}(t))m_{l}(t)m_{r}(t)S(x_{l}(t)-x_{r}(t))
    \\
    &=\frac{1}{N^{2}}\stackrel[r=1]{N}{\sum}\stackrel[l=1]{(i-1)^{\ast}(j)}{\sum}m_{l}(t)m_{r}(t)S(x_{l}(t)-x_{r}(t)),
\end{align*}
while 
\begin{align*}
\frac{d}{dt}F_{N}(t,x)&=\frac{1}{N^{2}}\stackrel[l=1]{(i-1)^{\ast}(j)}{\sum}\stackrel[r=1]{N}{\sum}m_{l}(t)m_{r}(t)S(x_{l}(t)-x_{r}(t))
\\
&=\frac{1}{N^{2}}\stackrel[r=1]{N}{\sum}\stackrel[l=1]{(i-1)^{\ast}(j)}{\sum}m_{l}(t)m_{r}(t)S(x_{l}(t)-x_{r}(t))=\mathbf{S}[F_{N}].
\end{align*}
This shows that $F_{N}$ is a classical solution on $V_{L}^{i,j}$.
The same calculation shows that $F_{N}$ is a classical solution on
$V_{R}^{i,j}$. 
\end{proof}
We move to verify conditions (ii) and (iii). 
\begin{lem}
\label{application of rankine hugoniot } The function $F_{N}$ is
an entropy solution to Equation (\ref{discretized Burgers}) in the
sense of Definition (\ref{definition of kruzkov solution-1})
\end{lem}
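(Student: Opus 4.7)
The plan is to verify the four requirements of Definition \ref{definition of kruzkov solution-1}. Items (1)--(3) are the short ones: (1) is exactly Lemma \ref{Long time constant}; (2) holds because $F_N$ is a sum of Heavisides with strictly positive coefficients $m_k(t)/N$ (Step 1(b) of Proposition \ref{Existence}); and (3) follows upon testing against a compactly supported smooth function, whereby convergence of each term as $t\to 0^+$ reduces to the continuity of the individual $m_k(t)$ and $x_k(t)$ via dominated convergence. For the entropy inequality (4) I follow the route indicated in the text: verify (i) classical-solution status on a finite partition, (ii) the Rankine--Hugoniot relation at every moving shock, and (iii) the Oleinik condition, and then invoke Appendix \ref{Appendix A}. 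Item (i) is precisely Lemma \ref{FN is classical solution on a partition}, so only (ii) and (iii) need work.

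On a collision-free slab $[T_{j-1},T_j)$ the jumps of $F_N(t,\cdot)$ sit exactly at the cluster positions $x=x_{i^\ast(j)}(t)$, with left trace $\theta_{(i-1)^\ast(j)}(t)$ and right trace $\theta_{i^\ast(j)}(t)$. Since $A_N(t,\cdot)$ interpolates $A$ at the breakpoints $\theta_l(t)$, the jump of $A_N(t,F_N)$ across the shock equals $A(\theta_{i^\ast(j)})-A(\theta_{(i-1)^\ast(j)})=-(\theta_{i^\ast(j)}+\theta_{(i-1)^\ast(j)})(\theta_{i^\ast(j)}-\theta_{(i-1)^\ast(j)})$, so the Rankine--Hugoniot target speed is $-(\theta_{i^\ast(j)}(t)+\theta_{(i-1)^\ast(j)}(t))$. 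To match this with the physical shock speed $\dot{x}_{i^\ast(j)}(t)$, I read off from the ODE in \eqref{opinion dynamics with kernel S} that particles in the same cluster $I_i^j$ contribute $\mathrm{sgn}(0)=0$, clusters strictly below contribute $-m_l/N$ and clusters strictly above contribute $+m_l/N$; using preservation of total mass (Step 1(a)) to rewrite these two partial sums as $\theta_{(i-1)^\ast(j)}(t)+\tfrac12$ and $\tfrac12-\theta_{i^\ast(j)}(t)$ respectively, one obtains exactly $\dot{x}_{i^\ast(j)}=-\theta_{i^\ast(j)}-\theta_{(i-1)^\ast(j)}$, as required.

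For Oleinik, since $A(F)=-F^{2}$ is strictly concave, its piecewise linear interpolant $A_N(t,\cdot)$ through the ordered breakpoints $\theta_0(t)<\theta_1(t)<\cdots<\theta_N(t)$ is itself concave; the shocks constructed above have $F_L=\theta_{(i-1)^\ast(j)}<\theta_{i^\ast(j)}=F_R$ (strict inequality, by positivity of the weights), and for a concave flux every such increasing jump automatically satisfies the Oleinik admissibility inequality, since the chord of $A_N$ between $F_L$ and any intermediate value lies above the chord between $F_L$ and $F_R$. The only delicate point of the whole proof is the bookkeeping at the Rankine--Hugoniot step, where one must correctly identify the traces and collapse the raw sum for $\dot{x}_{i^\ast(j)}$ into $\theta$-values by exactly the right use of mass preservation; once that is in place, the piecewise-linear concavity of $A_N$ makes Oleinik immediate and the implication (i)$+$(ii)$+$(iii)$\Rightarrow$(\ref{entropy inequality}) is the classical reduction recorded in Appendix \ref{Appendix A}.
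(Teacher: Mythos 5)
Your proposal is correct and follows essentially the same route as the paper: conditions (1)--(2) from Lemma \ref{Long time constant } and positivity of the weights, and the entropy inequality via Lemma \ref{FN is classical solution on a partition }, the Rankine--Hugoniot identity (using mass preservation and the sticky-particle structure to identify the chord slope $-(\theta_{\overline{i}}+\theta_{\underline{i}})$ with $\dot{x}_{\overline{i}}$), the Oleinik condition, and the reduction in Appendix \ref{Appendix A}. The only cosmetic difference is that you verify Oleinik by observing that $A_{N}(t,\cdot)$ is the concave piecewise-linear interpolant of the concave flux $A(F)=-F^{2}$, whereas the paper reduces to the breakpoints and checks $-(\theta_{k}+\theta_{\underline{i}})\geq-(\theta_{\overline{i}}+\theta_{\underline{i}})$ explicitly; these are the same estimate.
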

\begin{proof}
Condition (1) is from Lemma \ref{Long time constant } and that $x\mapsto F_{N}(t,x)$
is non-decreasing is immediate from the fact that the weights are
positive, which gives condition (2). Validating the requested weak
inequality is slightly longer and rests upon the Rankine-Hugoniot
and Oleinik conditions (which as already remarked, are recapped in
Appendix \ref{Appendix A}). With the same notation of Lemma \ref{FN is classical solution on a partition },
keep $1\leq j\leq k$ and $1\leq i\leq m_{j}$ fixed and consider
the curve 

\[
\Gamma_{i,j}\coloneqq\left\{ (t,x_{i^{\ast}(j)}(t))\left|t\in[T_{j-1},T_{j})\right.\right\} .
\]

\textbf{The Rankine-Hugoniot Condition}. We wish to show that 
\[
\frac{A(t,F_{N,L}^{i,j}(t))-A(t,F_{N,R}^{i,j}(t))}{F_{N,L}^{i,j}(t)-F_{N,R}^{i,j}(t)}=\dot{x}_{i^{\ast}(j)}(t).
\]
To make the equations a bit lighter let us abbreviate $i^{\ast}(j)=\overline{i},(i-1)^{\ast}(j)=\underline{i}$.
It is clear that 
\[
F_{N,L}^{i,j}(t)=-\frac{1}{2}+\frac{1}{N}\stackrel[r=1]{\underline{i}}{\sum}m_{r}(t)=\theta_{\underline{i}}(t)
\]
and 
\[
F_{N,R}^{i,j}(t)=-\frac{1}{2}+\frac{1}{N}\stackrel[r=1]{\overline{i}}{\sum}m_{r}(t)=\theta_{\overline{i}}(t).
\]
Since $\frac{1}{N}\stackrel[r=1]{N}{\sum}m_{r}(t)=1$ we may rewrite 

\[
F_{N,L}^{i,j}(t)=\frac{1}{2}-\frac{1}{N}\underset{r>\underline{i}}{\sum}m_{r}(t).
\]
Therefore, using that $\frac{1}{N}\stackrel[j=1]{N}{\sum}m_{j}(t)=1$
and the fact that the initial order is preserved, the following identites
hold 
\begin{align*}
\frac{A_{N}(t,F_{N,L}^{i,j}(t))-A_{N}(t,F_{N,R}^{i,j}(t))}{F_{N,L}^{i,j}(t)-F_{N,R}^{i,j}(t)}&=\frac{\theta_{\overline{i}}^{2}(t)-\theta_{\underline{i}}^{2}(t)}{\theta_{\underline{i}}(t)-\theta_{\overline{i}}(t)}=-(\theta_{\overline{i}}(t)+\theta_{\underline{i}}(t))\\
&=1-\frac{1}{N}\overset{\overline{i}}{\underset{r=1}{\sum}}m_{r}(t)-\frac{1}{N}\overset{\underline{i}}{\underset{r=1}{\sum}}m_{r}(t)\\
&=1-\left(1-\frac{1}{N}\underset{r>\overline{i}}{\sum}m_{r}(t)\right)-\frac{1}{N}\overset{\underline{i}}{\underset{r=1}{\sum}}m_{r}(t)=\frac{1}{N}\underset{r>\overline{i}}{\sum}m_{r}(t)-\frac{1}{N}\overset{\underline{i}}{\underset{r=1}{\sum}}m_{r}(t)\\
&=\frac{1}{N}\underset{r>\overline{i}}{\sum}\mathrm{sgn}(x_{r}(t)-x_{\overline{i}}(t))m_{r}(t)+\frac{1}{N}\overset{\underline{i}}{\underset{r=1}{\sum}}\mathrm{sgn}(x_{r}(t)-x_{\overline{i}}(t))m_{r}(t)\\
&\quad+\frac{1}{N}\overset{\overline{i}}{\underset{r=\underline{i}+1}{\sum}}\mathrm{sgn}(x_{r}(t)-x_{\overline{i}}(t))m_{r}(t)\\
&=\frac{1}{N}\stackrel[r=1]{N}{\sum}m_{r}(t)\mathrm{sgn}(x_{r}(t)-x_{\overline{i}}(t))=\dot{x}_{\overline{i}}(t).    
\end{align*}
\textbf{The Oleinik Condition}. Let $\theta\in(\theta_{\underline{i}}(t),\theta_{\overline{i}}(t))$.
We wish to show 

\[
\frac{A_{N}(t,\theta)-A_{N}(t,\theta_{\underline{i}}(t)))}{\theta-\theta_{\underline{i}}(t)(t)}\geq\dot{x}_{\overline{i}}(t).
\]

Note that if we pick $\underline{i}+1\leq m\leq\overline{i}$ such
that $\theta\in[\theta_{m-1}(t),\theta_{m}(t)]$ then since $A_{N}$
is piecewise linear we clearly have 

\[
\frac{A_{N}(t,\theta)-A_{N}(t,\theta_{\underline{i}}(t)))}{\theta-\theta_{\underline{i}}(t)}\geq\min\left\{ \frac{A_{N}(t,\theta_{m}(t))-A_{N}(t,\theta_{\underline{i}}(t)))}{\theta_{m}(t)-\theta_{\underline{i}}(t)},\frac{A_{N}(t,\theta_{m-1}(t))-A_{N}(t,\theta_{\underline{i}}(t)))}{\theta_{m-1}(t)-\theta_{\underline{i}}(t)}\right\} .
\]
Therefore it suffices to check the inequality for $\theta=\theta_{k}(t)$
where $\underline{i}\leq k\leq\overline{i}$. We have

\[\frac{A_{N}(t,\theta_{k}(t))-A_{N}(t,\theta_{\underline{i}}(t)))}{\theta_{k}(t)-\theta_{\underline{i}}(t)}
=\frac{A(\theta_{k}(t))-A(\theta_{\underline{i}}(t))}{\theta_{k}(t)-\theta_{\underline{i}}(t)}=-(\theta_{k}(t)+\theta_{\underline{i}}(t))
\geq-(\theta_{\overline{i}}(t)+\theta_{\underline{i}}(t))=\dot{x}_{\overline{i}}(t)
\]
where the inequality is because $\theta_{\overline{i}}(t)\geq\theta_{k}(t)$,
and the last identity is a byproduct of the previous step. 
\end{proof}

\section{\label{sec:The-Mean-Field} The Mean Field Limit }

In this section we prove well-posedness, stability estimates and mean
field limit for the equation 

\begin{equation}
\partial_{t}F+\partial_{x}(A(t,F))=\mathbf{S}[F](t,x)\label{burgers section 4}
\end{equation}
where 

\[
\mathbf{S}[F](t,x)\coloneqq F(t,x)(\phi\star F)(t,x)-\int^x_{-\infty}F(t,z)(\partial_{z}\phi\star F)(t,z)dz,\ \phi\coloneqq\partial_{x}S.
\]
This equation has a slightly more general form than the Equation \ref{eq:-3}, since here the flux is time dependent. As already explained, the strategy of proof is a modification of the
argument in \cite{11}, and can be divided into the following steps

1. Extraction of a converging subsequence from $F_{N}$ with a limit
$F$. 

2. Showing that the limit $F$ obtained in 1. is an entropy solution. 

3. Provided steps 1+2 are successfully established, it remains to
prove stability estimates (from which the remaining parts- that is
uniqueness and mean field limit- would follow). This is one step where
our argument differs from the one in \cite{11}, since the source
term in question carries a different form. Some technical modifications
appear in step 2 as well. 

Before detailing the proof of the plan proposed above, we recall the
following chain rule for $BV$ functions, which will be used in the
course of the proof 
\begin{lem}
\textup{(\cite{5}, Lemma A.21)} \label{Chain rule } Suppose $W\in BV_{\mathrm{loc}}(\mathbb{R})$
and $f$ is Lipschitz. Then $f\circ W\in BV_{\mathrm{loc}}(\mathbb{R})$
and 

\[
\left|\frac{d}{dx}f\circ W\right|\leq\left|f\right|_{\mathrm{Lip}}\left|\frac{d}{dx}W\right|
\]
in the sense of measures. 
\end{lem}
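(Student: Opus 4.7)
The plan is to prove this by mollification. Let $\rho_\varepsilon$ be a standard symmetric mollifier on $\mathbb{R}$ and set $W_\varepsilon \coloneqq \rho_\varepsilon \ast W$. Then $W_\varepsilon \in C^\infty(\mathbb{R})$ with $W_\varepsilon \to W$ in $L^1_{\mathrm{loc}}(\mathbb{R})$, and because $f$ is Lipschitz also $f \circ W_\varepsilon \to f \circ W$ in $L^1_{\mathrm{loc}}(\mathbb{R})$. In particular, once the inequality is established we will automatically have $f \circ W \in BV_{\mathrm{loc}}(\mathbb{R})$.

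For the smooth approximants the claim reduces to the classical pointwise chain rule: by Rademacher's theorem $|f'|\leq |f|_{\mathrm{Lip}}$ a.e., so $|(f \circ W_\varepsilon)'(x)| \leq |f|_{\mathrm{Lip}}\,|W_\varepsilon'(x)|$ for a.e.\ $x$. Tested against any nonnegative $\varphi \in C_c(\mathbb{R})$ this reads
\[
\int_{\mathbb{R}} \varphi \, d|D(f \circ W_\varepsilon)| \;\leq\; |f|_{\mathrm{Lip}} \int_{\mathbb{R}} \varphi \, d|DW_\varepsilon|.
\]

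To close the argument I would send $\varepsilon \to 0$. The left-hand side is controlled by lower semicontinuity of the total variation measure under $L^1_{\mathrm{loc}}$ convergence, giving $\int \varphi\, d|D(f \circ W)| \leq \liminf_\varepsilon \int \varphi\, d|D(f \circ W_\varepsilon)|$. For the right-hand side I would use the identity $DW_\varepsilon = \rho_\varepsilon \ast DW$, whose density against Lebesgue measure is dominated pointwise by $\rho_\varepsilon \ast |DW|$; Fubini then transfers the mollification onto $\varphi$, giving $\int \varphi\, d|DW_\varepsilon| \leq \int (\rho_\varepsilon \ast \varphi)\, d|DW|$, which tends to $\int \varphi\, d|DW|$ by uniform convergence of $\rho_\varepsilon \ast \varphi$ to $\varphi$ on the compact support of $\varphi$. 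Chaining these three estimates, and noting that $\varphi \geq 0$ in $C_c(\mathbb{R})$ was arbitrary, yields $|D(f \circ W)| \leq |f|_{\mathrm{Lip}}\,|DW|$ as Radon measures.

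The main obstacle is this last passage: one cannot simply claim $|DW_\varepsilon| \to |DW|$ weakly on the right-hand side, because mollification of the signed measure $DW$ may redistribute mass in a way that makes $|DW_\varepsilon|$ depend nontrivially on $\varepsilon$. The Fubini trick above circumvents this by moving the smoothing onto the nonnegative test function $\varphi$, where convergence is just the routine uniform approximation and the desired bound by $|DW|$ falls out cleanly.
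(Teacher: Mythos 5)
Your argument is correct. Note, however, that the paper does not prove this lemma at all: it is quoted as Lemma A.21 of Bouchut--Perthame \cite{5}, so there is no internal proof to compare with; what you have done is supply a self-contained proof of the cited fact, by the standard mollification route. Two small remarks on your write-up. First, the appeal to Rademacher's theorem and a ``pointwise chain rule'' is unnecessary and slightly misleading (the identity $(f\circ W_\varepsilon)'=f'(W_\varepsilon)W_\varepsilon'$ need not hold where $f$ fails to be differentiable at $W_\varepsilon(x)$); the inequality $\left|(f\circ W_\varepsilon)'(x)\right|\leq\left|f\right|_{\mathrm{Lip}}\left|W_\varepsilon'(x)\right|$ at every point of differentiability of the locally Lipschitz function $f\circ W_\varepsilon$ follows directly from the difference-quotient bound $\left|f(W_\varepsilon(y))-f(W_\varepsilon(x))\right|\leq\left|f\right|_{\mathrm{Lip}}\left|W_\varepsilon(y)-W_\varepsilon(x)\right|$, which is all you need. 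Second, the phrase ``once the inequality is established we will automatically have $f\circ W\in BV_{\mathrm{loc}}$'' is better justified by observing that your right-hand-side bound gives $\sup_\varepsilon\left|D(f\circ W_\varepsilon)\right|(K)<\infty$ on compacts (since $\left|DW_\varepsilon\right|(K)\leq\left|DW\right|(K_\varepsilon)$), so the lower semicontinuity of the variation functional under $L^1_{\mathrm{loc}}$ convergence simultaneously yields membership in $BV_{\mathrm{loc}}$ and the estimate, with no circularity; for nonnegative $\varphi\in C_c$ the weighted lower semicontinuity you use follows from the open-set version by a layer-cake decomposition and Fatou. Your Fubini device for the right-hand side, moving the mollifier onto $\varphi$ rather than claiming $\left|DW_\varepsilon\right|\to\left|DW\right|$, is exactly the right way to close the argument.
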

In order to treat the source term we will be forced to verify the
entropy inequality only on dense subset of the real line, which is
sufficient, as observed in the following simple 
\begin{lem}
\label{entropy condition on a dense set } The entropy inequality
holds (\ref{entropy inequality}) iff it holds for some dense set
$D\subset\mathbb{R}$. 
\end{lem}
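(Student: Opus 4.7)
The forward direction is immediate. For the nontrivial direction, fix an arbitrary $\alpha\in\mathbb{R}$. Using the density of $D$, I can select two sequences $(\alpha_n^-),(\alpha_n^+)\subset D$ with $\alpha_n^-\uparrow\alpha$ and $\alpha_n^+\downarrow\alpha$. The plan is to pass to the limit in the entropy inequality (\ref{entropy inequality}) along each of these sequences separately, and then combine the two resulting inequalities so that an unwanted contribution supported on $\{F=\alpha\}$ cancels out.

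The first two terms of (\ref{entropy inequality}) handle themselves via dominated convergence. On $\mathrm{supp}(\chi)$, $F$ takes values in $[-\tfrac12,\tfrac12]$, $\alpha_n^\pm$ is bounded, and $A$ is continuous, so the integrands $|F-\alpha_n^\pm|\chi_t$ and $\sgn(F-\alpha_n^\pm)(A(F)-A(\alpha_n^\pm))\partial_x\chi$ are bounded uniformly in $n$; they converge pointwise to their targets, including on the potentially delicate set $\{F=\alpha\}$, where the Term~2 integrand tends to $0$ (because $A(\alpha)-A(\alpha_n^\pm)\to 0$), matching the value $\sgn(0)(A(\alpha)-A(\alpha))=0$ of the target integrand there.

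The genuine subtlety sits in the third term. On $\{F\neq\alpha\}$ one still has $\sgn(F-\alpha_n^\pm)\to\sgn(F-\alpha)$ pointwise, but on $\{F=\alpha\}$ we have $\sgn(F-\alpha_n^-)=+1$ and $\sgn(F-\alpha_n^+)=-1$, neither of which equals $\sgn(0)=0$. Splitting the integral along $\{F\neq\alpha\}\sqcup\{F=\alpha\}$ and applying dominated convergence on each piece (with dominating function $\|\chi\mathbf{S}[F]\|_\infty\mathbf{1}_{\mathrm{supp}(\chi)}$, which is integrable since $\mathbf{S}[F]$ is bounded and continuous), the two resulting limits of (\ref{entropy inequality}) become
\begin{equation*}
I(\alpha)+J\geq 0\qquad\text{and}\qquad I(\alpha)-J\geq 0,
\end{equation*}
where $I(\alpha)$ denotes the left-hand side of (\ref{entropy inequality}) at level $\alpha$ and $J:=\int_0^T\int_{\{F(t,x)=\alpha\}}\chi\,\mathbf{S}[F](t,x)\,dxdt$. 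Adding the two inequalities yields $I(\alpha)\geq 0$, finishing the proof. The only real obstacle is this one-sided discontinuity of the sign function at the level set, and the device of approximating $\alpha$ from both sides and averaging bypasses it cleanly; everything else is standard dominated convergence.
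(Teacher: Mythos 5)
Your argument is correct, and at bottom it is the same strategy as the paper's: approximate an arbitrary level $\alpha$ by elements of the dense set $D$ and pass to the limit in (\ref{entropy inequality}) by dominated convergence, the only delicate point being the level set $\{F=\alpha\}$, where $\sgn(F-\beta_k)$ does not converge to $\sgn(F-\alpha)=0$. The difference lies in how that set is handled. The paper rewrites the inequality at level $\beta_k$ as the integral over $\{F>\beta_k\}$ minus the integral over $\{F<\beta_k\}$ and passes to the limit in each piece, claiming the closed-indicator limits $\mathbf{1}_{F\geq\alpha}$ and $\mathbf{1}_{F\leq\alpha}$, whose difference is exactly the $\sgn$-form at level $\alpha$ because the contributions of $\{F=\alpha\}$ (each equal to $\iint_{\{F=\alpha\}}\chi\,\mathbf{S}[F]$) cancel; strictly speaking, a single sequence $\beta_k\to\alpha$ yields only one of the two closed-indicator limits (the other comes with a strict inequality), and the resulting mismatch is precisely your term $J=\iint_{\{F=\alpha\}}\chi\,\mathbf{S}[F]$. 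Your device of approximating $\alpha$ from below and from above separately, obtaining $I(\alpha)+J\geq0$ and $I(\alpha)-J\geq0$ and adding, handles this symmetrically and dispenses with the indicator bookkeeping altogether, so it is, if anything, the more watertight rendering of the same computation; the dominated-convergence justifications you invoke (compact support of $\chi$, boundedness of $F$ and of $\mathbf{S}[F]$, continuity of $A$) are exactly what is needed.
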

\begin{proof}
Let $D\subset\mathbb{R}$ be dense and suppose (\ref{entropy inequality})
holds all $\beta\in D$, and let $\alpha\in\mathbb{R}$. Then, taking
a sequence $\left(\beta_{k}\right)\subset D$ such that $\beta_{k}\rightarrow\alpha$,
we have that for all $k$ 

\begin{align*}
&\int_0^T\int_\mathbb{R}\mathbf{1}_{F>\beta_{k}}\left((F-\beta_{k})\chi_{t}+\left(A(F)-A(\beta_{k})\right)\partial_{x}\chi+\chi\mathbf{S}(F)(t,x)\right)dxdt \\
&\quad-\int_0^T\int_\mathbb{R}\mathbf{1}_{F<\beta_{k}}\left((F-\beta_{k})\chi_{t}+\left(A(F)-A(\beta_{k})\right)\partial_{x}\chi+\chi\mathbf{S}(F)(t,x)\right)dxdt .
\end{align*}
As $k\rightarrow\infty$ the first integral tends to 
\[
\int_0^T\int_\mathbb{R}\mathbf{1}_{F\geq\alpha}\left((F-\alpha)\chi_{t}+\left(A(F)-A(\alpha)\right)\partial_{x}\chi+\chi\mathbf{S}(F)(t,x)\right)dxdt,
\]
whereas the second integral tends to 

\[
-\int_0^T\int_\mathbb{R}\mathbf{1}_{F\leq\alpha}\left((F-\alpha)\chi_{t}+\left(A(F)-A(\alpha)\right)\partial_{x}\chi+\chi\mathbf{S}(F)(t,x)\right)dxdt,
\]
so that 
\begin{align*}
&\int_0^T\int_\mathbb{R}\left(\left|F-\alpha\right|\chi_{t}+\mathrm{sgn}(F-\alpha)\left(A(F)-A(\alpha)\right)\partial_{x}\chi+\mathrm{sgn}(F-\alpha)\chi\mathbf{S}(F)(t,x)\right)dxdt \\
&\quad=\int_0^T\int_\mathbb{R}\mathbf{1}_{F\geq\alpha}\left((F-\alpha)\chi_{t}+\left(A(F)-A(\alpha)\right)\partial_{x}\chi+\chi\mathbf{S}(F)(t,x)\right)dxdt \\
&\qquad-\int_0^T\int_\mathbb{R}\mathbf{1}_{F\leq\alpha}\left((F-\beta_{k})\chi_{t}+\left(A(F)-A(\beta_{k})\right)\partial_{x}\chi+\chi\mathbf{S}(F)(t,x)\right)dxdt\geq0.
\end{align*}
\end{proof}
We are now in a good position to prove the main result, which we now
state. Remark that we insist on including time dependency in the flux
in order to enable applying the belowstated stability estimate for
the fluxes $A_{N}(t,x)$ and $A(x)$ as defined in Section \ref{sec:The-Discretized-Version}.
\begin{thm}
\label{Main Theorem } Let $S\in C_{0}^{\infty}(\mathbb{R})$ be such
$\mathrm{supp}(S)\subset[-r,r]$ for some $r>0$. Let $A(t,x),\widetilde{A}(t,x)\in C([0,T];\mathrm{Lip}([-\frac{1}{2},\frac{1}{2}]))$.
Suppose $F^{\mathrm{0}}\in BV(\mathbb{R})$ is non-decreasing and
there is some $R>0$ such that 
\begin{equation}
\forall x\geq R:F^{\mathrm{0}}(\pm x)=\pm\frac{1}{2}.\label{eq:assumption on initial data}
\end{equation}
1. (Well-posedness) There exist a unique entropy solution (in the
sense of Definition (\ref{definition of kruzkov solution-1})) to
the problem (\ref{burgers section 4}). 

2. (Stability) If $F,\widetilde{F}$ are two entropy solutions with
initial datas $F^{\mathrm{0}},\widetilde{F^{\mathrm{0}}}$ respectively
satisfying (\ref{eq:assumption on initial data}) then there is some
$C=C(r,R,\left\Vert \partial_{z}\phi\right\Vert _{\infty},\left\Vert \phi\right\Vert _{\infty})>0$
such that 

\[
\left\Vert F(t,\cdot)-\widetilde{F}(t,\cdot)\right\Vert _{1}\leq e^{Ct}\left(\left\Vert F^{\mathrm{0}}-\widetilde{F^{\mathrm{0}}}\right\Vert _{1}+t\underset{t\in[0,T]}{\sup}\left|A(t,\cdot)-\widetilde{A}(t,\cdot)\right|_{\mathrm{Lip}}\right).
\]
3. (Mean field limit) It holds that $F_{N}-F\underset{N\rightarrow\infty}{\rightarrow}0$
in $C([0,T];L^{1}(\mathbb{R}))$ provided $F_{N}^{\mathrm{0}}-F^{\mathrm{0}}\rightarrow0$
in $L^{1}(\mathbb{R})$. 
\end{thm}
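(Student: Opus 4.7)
The plan is to interlock the three parts by establishing the stability estimate of part~2 in a form flexible enough to compare entropy solutions attached to two possibly different fluxes, and then to deduce uniqueness (part~1) and the mean field limit (part~3) from it. Existence in part~1 will come from exhibiting a subsequential limit of the $F_N$ for a suitably chosen sequence of approximating initial data.

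For the compactness step, each $F_N(t,\cdot)$ is non-decreasing with range in $[-\tfrac12,\tfrac12]$ and, by Lemma~\ref{Long time constant }, is constant outside the fixed interval $[-\overline R,\overline R]$, so $\mathrm{TV}(F_N(t,\cdot))=1$ uniformly in $t,N$. Equation~\eqref{discretized Burgers} combined with uniform bounds on $A_N$ and $\mathbf S[F_N]$ provides equicontinuity in time in $L^1$. Helly--Kolmogorov--Riesz then extracts a subsequence converging in $C([0,T];L^1(\mathbb R))$ to a non-decreasing limit $F$ constant outside $[-\overline R,\overline R]$. Passing to the limit in the entropy inequality for $F_N$ uses Lemma~\ref{AN approximates A} for the flux term and the uniform convergences $\phi\star F_N\to\phi\star F$, $\partial_z\phi\star F_N\to\partial_z\phi\star F$ for the source term, yielding that $F$ is an entropy solution in the sense of Definition~\ref{definition of kruzkov solution-1}.

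The heart of the argument, and the main obstacle, is the stability estimate. For two entropy solutions $F,\widetilde F$ with fluxes $A,\widetilde A$ I would apply Kruzkov's doubling of variables with the usual test function $\rho_\varepsilon(t-s)\rho_\varepsilon(x-y)\psi(\tfrac{t+s}{2},\tfrac{x+y}{2})$, selecting $\alpha=\widetilde F(s,y)$ in the entropy inequality for $F$ and $\alpha=F(t,x)$ in that for $\widetilde F$, summing, and letting $\varepsilon\to 0$. After the standard cancellations one obtains
\begin{equation*}
\int_0^T\!\!\int_{\mathbb R}\bigl(|F-\widetilde F|\psi_t+\sgn(F-\widetilde F)(A(t,F)-\widetilde A(t,\widetilde F))\psi_x\bigr)dx\,dt+\Sigma\ge 0,
\end{equation*}
with $\Sigma=\int_0^T\!\!\int_{\mathbb R}\sgn(F-\widetilde F)(\mathbf S[F]-\mathbf S[\widetilde F])\psi\,dx\,dt$. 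The new ingredient compared with \cite{11} is controlling $\Sigma$ by $C\int_0^T\|F(t,\cdot)-\widetilde F(t,\cdot)\|_1\,dt$. To this end I would expand
\begin{equation*}
\mathbf S[F]-\mathbf S[\widetilde F]=(F-\widetilde F)(\phi\star F)+\widetilde F\bigl(\phi\star(F-\widetilde F)\bigr)-\int_{-\infty}^x\!\!\bigl[(F-\widetilde F)(\partial_z\phi\star F)+\widetilde F(\partial_z\phi\star(F-\widetilde F))\bigr]dz,
\end{equation*}
and bound each piece by Young's convolution inequality $\|\phi\star G\|_\infty\le \|\phi\|_\infty\|G\|_1$ (and likewise for $\partial_z\phi$), together with $|\widetilde F|\le\tfrac12$ and the fixed compact support of the integrands; the inner $z$-integral is handled by Fubini against the compactly supported prefactor $\sgn(F-\widetilde F)\psi$. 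Specializing $\psi$ to a smooth approximation of $\mathbf 1_{[0,\tau]\times\mathbb R}$ (legitimate because $F,\widetilde F$ coincide with $\pm\tfrac12$ outside a fixed interval), splitting $A(t,F)-\widetilde A(t,\widetilde F)=(A(t,F)-A(t,\widetilde F))+(A(t,\widetilde F)-\widetilde A(t,\widetilde F))$, estimating the first summand by Lemma~\ref{Chain rule } and the second by $|A(t,\cdot)-\widetilde A(t,\cdot)|_{\mathrm{Lip}}$, and invoking Lemma~\ref{entropy condition on a dense set } to restrict attention to a dense set of $\alpha$'s, Gronwall's inequality produces the claimed bound.

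Uniqueness in part~1 is the special case $A=\widetilde A$, $F^{\mathrm{in}}=\widetilde F^{\mathrm{in}}$ of this estimate. For the mean field limit in part~3 I would apply the estimate to the pair $(F_N,F)$ with fluxes $(A_N,A)$, since Lemma~\ref{application of rankine hugoniot } identifies $F_N$ as an entropy solution with flux $A_N$. The first term on the right-hand side vanishes by hypothesis on the initial data, and the second because the slopes of the piecewise linear interpolant $A_N$ of the smooth $A(x)=-x^2$ satisfy $|A_N(t,\cdot)-A(\cdot)|_{\mathrm{Lip}}=O(1/N)$ (from the mean value theorem applied to the slopes, in the same spirit as Lemma~\ref{AN approximates A}), which concludes the proof.
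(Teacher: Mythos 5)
Your overall architecture is the same as the paper's: compactness of $F_N$ (Helly plus time-equicontinuity plus Arzel\`a--Ascoli), passage to the limit in the entropy inequality with the discretized flux, a Kruzkov doubling argument in which the nonlocal source difference $\mathbf{S}[F]-\mathbf{S}[\widetilde F]$ is expanded into four pieces and bounded by $C\int\Vert F-\widetilde F\Vert_1$ via $L^\infty$--$L^1$ convolution bounds and the fixed compact support (this is exactly the paper's $J_1,\dots,J_4$), Gronwall, and finally the mean field limit by applying the two-flux stability estimate to the pair $(F_N,F)$ with $\sup_t\left|A_N(t,\cdot)-A(\cdot)\right|_{\mathrm{Lip}}=O(1/N)$, which is precisely why the paper allows time-dependent fluxes in the statement.

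However, the stability step has a genuine gap. The two-dimensional inequality you claim ``after the standard cancellations,'' with the single flux term $\sgn(F-\widetilde F)\left(A(t,F)-\widetilde A(t,\widetilde F)\right)\psi_x$ and no additional error, cannot be correct when $A\neq\widetilde A$: with your choice of $\psi$ (an approximate indicator in time multiplied by $g$ with $g\equiv1$ on $[-R,R]$), $\psi_x$ is supported where $F=\widetilde F=\pm\tfrac12$, hence $\sgn(F-\widetilde F)\psi_x\equiv0$ and the inequality would yield an $L^1$ contraction with no dependence on $A-\widetilde A$ whatsoever; this would force entropy solutions with the same data but different fluxes to coincide (false already for $S\equiv0$, $A=-x^2$, $\widetilde A=0$). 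The point is that when the fluxes differ, the singular terms carrying $\rho_\varepsilon'(x-y)$ do not cancel, and their mismatch is of order $\left|A(t,\cdot)-\widetilde A(t,\cdot)\right|_{\mathrm{Lip}}\left|F-\widetilde F\right|$ against a kernel of size $\varepsilon^{-2}$, so one cannot send $\varepsilon\to0$ before treating it. This is exactly the paper's integral $III$: split into $A_\pm=\tfrac12(A\pm\widetilde A)$, observe the $A_+$ contribution vanishes (as in your argument), and handle the $A_-$ contribution, which carries the derivative of the mollifier, by integrating by parts in the transverse variable and using the BV chain rule (Lemma \ref{Chain rule }) together with $\int(|\partial_x F|+|\partial_x\widetilde F|)\,dx=2$; this is what produces the term $t\sup_t\left|A(t,\cdot)-\widetilde A(t,\cdot)\right|_{\mathrm{Lip}}$ in the final bound. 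Your post-limit bookkeeping does not recover it, and in addition the estimate $\left|A(t,\widetilde F)-\widetilde A(t,\widetilde F)\right|\le\left|A(t,\cdot)-\widetilde A(t,\cdot)\right|_{\mathrm{Lip}}$ is not valid, since additive constants do not cancel in that splitting (only differences of the form $(A-\widetilde A)(t,F)-(A-\widetilde A)(t,\widetilde F)$ are controlled by the Lipschitz seminorm). A secondary misplacement: Lemma \ref{entropy condition on a dense set } is not what is needed inside the doubling argument (there the constant is $\widetilde F(s,y)$), but it is needed where you omitted it, namely when passing $N\to\infty$ in the source term of the entropy inequality, because $\sgn(F_N-\alpha)\to\sgn(F-\alpha)$ only a.e. off $\{F=\alpha\}$, so one must restrict to a dense set of $\alpha$ with $\lambda\{F=\alpha\}=0$.
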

\textit{Proof}. \textbf{Step 1. Extracting a converging subsequence
from $F_{N}$. }Keep $t\in[0,T]$ fixed . We know that $F_{N}(t,\cdot)$
is a sequence of non-decreasing functions with uniformly bounded (with
respect to $N$) total variation on each compact subset. Therefore,
by Helly's selection theorem there is a subsequence, still labled
$F_{N}(t,\cdot)$, and some $F(t,\cdot)\in L_{\mathrm{loc}}^{1}(\mathbb{R})$
such that $F_{N}(t,\cdot)\underset{N\rightarrow\infty}{\rightarrow}F(t,\cdot)$
pointwise a.e. By diagonalization we may find a subsequence, still
labled $F_{N}(t)$, such that $F_{N}(t,\cdot)$ converge to $F(t,\cdot)$
for all $t\in\mathbb{Q}$. Recall that by Lemma \ref{Long time constant },
as long as $x>R$, $F_{N}(t,\pm x)=\pm\frac{1}{2}$ for all $t$.
As a result, the same conclusion is true for the limit $F(t,\cdot)$,
so that we have $F_{N}(t,\cdot)-F(t,\cdot)\underset{N\rightarrow\infty}{\rightarrow}0$
in $L^{1}(\mathbb{R})$ for all $t\in\mathbb{Q}$. Next, we upgrade
the convergence to irrational times as well. 
\begin{align*}
  \int_\mathbb{R}\left|F_{N}(t,x)-F_{N}(s,x)\right|dx &=\int_{\left|x\right|\leq R}\left|\frac{1}{N}\stackrel[i=1]{N}{\sum}m_{i}(t)\mathbf{1}_{\left\{ x\left|x\geq x_{i}(t)\right.\right\} }-\frac{1}{N}\stackrel[i=1]{N}{\sum}m_{i}(s)\mathbf{1}_{\left\{ x\left|x\geq x_{i}(s)\right.\right\} }\right|dx\\
   &\leq\int_{\left|x\right|\leq R}\left|\frac{1}{N}\stackrel[i=1]{N}{\sum}m_{i}(t)\mathbf{1}_{\left\{ x\left|x_{i}(t)\leq x\leq x_{i}(s)\right.\right\} }\right|dx \\
   &\quad+\int_{\left|x\right|\leq R}\left|\frac{1}{N}\stackrel[i=1]{N}{\sum}(m_{i}(t)-m_{i}(s))\mathbf{1}_{\left\{ x\left|x\geq x_{i}(s)\right.\right\} }\right|dx.
\end{align*}
The first integral is 
\[
\leq\overline{M}\underset{1\leq i\leq N}{\max}\left|x_{i}(t)-x_{i}(s)\right|\leq\overline{M}^{2}\left|t-s\right|,
\]
whereas the second integral is 

\[
\leq2R\underset{1\leq i\leq N}{\max}\left|m_{i}(t)-m_{i}(s)\right|\leq2RC(1+\overline{M})\left|t-s\right|,
\]
so that 
\begin{equation}
\left\Vert F_{N}(t,\cdot)-F_{N}(s,\cdot)\right\Vert _{1}\leq c\left|t-s\right|,\label{eq:-4}
\end{equation}
for some suitable constant $c=c(C,\overline{M},R)$. The estimate
(\ref{eq:-4}) implies that $F_{N}(t,\cdot)$ is a Cauchy sequence
in $L^{1}((-R,R))$ for any $t\in[0,T]$ . Indeed, keep $t$ fixed
and let $\left\{ t_{k}\right\} _{k=1}^{\infty}\subset\mathbb{Q}$
such that $t_{k}\underset{k\rightarrow\infty}{\rightarrow}t$ . Let
$\varepsilon>0$. We estimate 
\begin{align*}
\left\Vert F_{N}(t,\cdot)-F_{N+p}(t,\cdot)\right\Vert _{1}
&\leq\left\Vert F_{N}(t,\cdot)-F_{N}(t_{k},\cdot)\right\Vert _{1}+\left\Vert F_{N}(t_{k},\cdot)-F(t_{k},\cdot)\right\Vert _{1}\\
&\quad+\left\Vert F_{N+p}(t,\cdot)-F_{N+p}(t_{k},\cdot)\right\Vert _{1}+\left\Vert F_{N+p}(t_{k},\cdot)-F(t_{k},\cdot)\right\Vert _{1} \\
&\leq2c\left|t-t_{k}\right|+\left\Vert F_{N}(t_{k},\cdot)-F(t_{k},\cdot)\right\Vert _{1}+\left\Vert F_{N+p}(t_{k},\cdot)-F(t_{k},\cdot)\right\Vert _{1}.    
\end{align*}
Pick $k$ large enough so that $\left|t-t_{k}\right|<\varepsilon$
and pick $N_{0}$ large enough so that for any $N\geq N_{0}$ one
has $\left\Vert F_{N}(t_{k},\cdot)-F(t_{k},\cdot)\right\Vert _{1}<\varepsilon.$
Then for these choices we get 

\[
\left\Vert F_{N}(t,\cdot)-F_{N+p}(t,\cdot)\right\Vert \lesssim\varepsilon
\]
and therefore there is some $\widetilde{F}(t,\cdot)\in L^{1}((-R,R))$
such that $F_{N}(t,\cdot)\underset{N\rightarrow\infty}{\rightarrow}\widetilde{F}(t,\cdot)$
in $L^{1}((-R,R))$. In particular, for each $t$, the sequence $F_{N}(t,\cdot)$
is confined in a compact set of $L^{1}((-R,R))$. Therefore the estimate
(\ref{eq:-4}) makes the theorem of Arzel\'a-Ascoli available (Theorem
1.1 in \cite{17} for example), thereby ensuring the existence of
a subsequence, still labled $F_{N}$, and an element $\widetilde{\widetilde{F}}\in C([0,T];L_{\mathrm{loc}}^{1}(\mathbb{R}))$
such that $F_{N}-\widetilde{\widetilde{F}}\underset{N\rightarrow\infty}{\rightarrow}0$
in $C([0,T];L^{1}(\mathbb{R}))$. It is clear that $\widetilde{\widetilde{F}}\in BV([0,T]\times\mathbb{R})$
and $\forall x\geq R:\widetilde{\widetilde{F}}(t,\pm x)=\pm\frac{1}{2}$.
To minimize cumbersome notation we shall hereafter denote by $F$
the limit function $\widetilde{\widetilde{F}}$ that we constructed. 

\textbf{Step 2. The Limit function $F$ is an entropy solution. }We
wish to show that the limit function $F$ obtained in step 1. is an
entropy solution to the original equation, which will establish the
existence claim. By Lemma \ref{application of rankine hugoniot }
we know that 
\[
\int_0^T\int_\mathbb{R}\left|F_{N}-\alpha\right|\chi_{t}+\mathrm{sgn}(F_{N}-\alpha)\left(A_{N}(t,F_{N})-A_{N}(t,\alpha)\right)\partial_{x}\chi+\mathrm{sgn}(F_{N}-\alpha)\chi\mathbf{S}[F_{N}](t,x)dxdt\geq0,
\]
and we wish now to pass to the limit as $N\rightarrow\infty$. That 

\[
\int_0^T\int_\mathbb{R}\left|F_{N}-\alpha\right|\chi_{t}\rightarrow\int_0^T\int_\mathbb{R}\left|F-\alpha\right|\chi_{t}
\]
is due to dominated convergence theorem. In addition, Lemma \ref{AN approximates A}
and the identity $\mathrm{sgn}(a-b)(a^{2}-b^{2})=\left|a-b\right|(a+b)$
entails 
\begin{align*}
  &\left|\int_0^T\int_\mathbb{R}\mathrm{sgn}(F_{N}-\alpha)\left(A_{N}(t,F_{N})-A_{N}(t,\alpha)\right)\partial_{x}\chi-\int_0^T\int_\mathbb{R}\mathrm{sgn}(F-\alpha)\left(A(F)-A(\alpha)\right)\partial_{x}\chi\right|\\ 
  &\quad\leq\left|\int_0^T\int_\mathbb{R}\mathrm{sgn}(F_{N}-\alpha)\left(A_{N}(t,F_{N})-A_{N}(t,\alpha)\right)\partial_{x}\chi-\int_0^T\int_\mathbb{R}\mathrm{sgn}(F_{N}-\alpha)\left(A(F_{N})-A(\alpha)\right)\partial_{x}\chi\right|\\
  &\qquad+\left|\int_0^T\int_\mathbb{R}\mathrm{sgn}(F_{N}-\alpha)\left(A(F_{N})-A(\alpha)\right)\partial_{x}\chi-\int_0^T\int_\mathbb{R}\mathrm{sgn}(F-\alpha)\left(A(F)-A(\alpha)\right)\partial_{x}\chi\right|\\
  &\quad\leq\frac{4\overline{M}\mathrm{Lip}(\left.A\right|_{[-\frac{1}{2},\frac{1}{2}]})}{N}\int_0^T\int_\mathbb{R}\left|\partial_{x}\chi\right|dxdt \\
  &\qquad+\left|\int_0^T\int_\mathbb{R}\left|F_{N}-\alpha\right|(F_{N}+\alpha)\partial_{x}\chi-\int_0^T\int_\mathbb{R}\left|F-\alpha\right|(F+\alpha)\partial_{x}\chi\right|\underset{N\rightarrow\infty}{\rightarrow}0.
\end{align*}
The last term is slightly more subtle. Observe that 
\[
\int_{-2R}^{2R}\phi(z-\zeta)F_{N}(t,\zeta)d\zeta\underset{N\rightarrow\infty}{\rightarrow}\int_{-2R}^{2R}\phi(z-\zeta)F(t,\zeta)d\zeta,
\]
so that 
\[
F_{N}(t,x)\phi\star F_{N}(t,x)\underset{N\rightarrow\infty}{\rightarrow}F(t,x)\phi\star F(t,x)
\]
pointwise a.e. Recall that $\phi'\star F_{N}$ is compactly supported
and therefore dominated convergence is applicable for the second integral
as well which implies 
\[
\int^x_{-\infty}F_{N}(t,z)\phi'\star F_{N}(t,z)dz\underset{N\rightarrow\infty}{\rightarrow}\int^x_{-\infty}F(t,z)\phi'\star F(t,z)dz.
\]
By Lemma (\ref{entropy condition on a dense set }) it suffices to
verify the entropy condition on dense set. Note that since $F$ is
locally summable it holds that $\lambda(\left\{ (t,x)\left|F(t,x)=\alpha\right.\right\} )=0$
for a.e. $\alpha$. In particular there is dense set $D\subset\mathbb{R}$
such that $\lambda(\left\{ F(t,x)=\alpha\right\} )=0$ for all $\alpha\in D$.
Therefore, since $F_{N}\rightarrow F$ and $\mathbf{S}[F_{N}](t,x)\rightarrow\mathbf{S}[F](t,x)$
pointwise a.e., we conclude that for all $\alpha\in D$ 
\begin{align*}
  \int_0^T\int_\mathbb{R}\mathrm{sgn}(F_{N}-\alpha)\chi\mathbf{S}[F_{N}](t,x)dxdt
  &=\int_0^T\int_\mathbb{R}\mathbf{1}_{\left\{ F(t,x)\neq\alpha\right\} }\mathrm{sgn}(F_{N}-\alpha)\chi\mathbf{S}[F_{N}](t,x)dxdt\\
  &\underset{N\rightarrow\infty}{\rightarrow}\int_0^T\int_\mathbb{R}\mathrm{sgn}(F-\alpha)\chi\mathbf{S}[F](t,x)dxdt  
\end{align*}
as desired. 

\textbf{Step 3.} \textbf{Uniqueness and Stability.} Let $F,\widetilde{F}$
be two entropy solutions. Keep $(s,y)\in[0,T]\times\mathbb{R}$ fixed.
Then by definition we have 
\begin{align*}
  0 &\le \iint\left|F(t,x)-\widetilde{F}(s,y)\right|\partial_{t}\chi(t,x,s,y)dxdt\\
  &\quad+\iint\mathrm{sgn}(F(t,x)-\widetilde{F}(s,y))\left(A(t,F(t,x))-A(t,\widetilde{F}(s,y))\right)\partial_{x}\chi(t,x,s,y)dxdt\\
  &\quad+\iint\mathrm{sgn}(F(t,x)-\widetilde{F}(s,y))\chi(t,x,s,y)\mathbf{S}[F](t,x)dxdt.  
\end{align*}
Exchanging the roles of $F$ and $\widetilde{F}$ we also have the
inequality 
\begin{align*}
0 &\le \iint\left|\widetilde{F}(s,y)-F(t,x)\right|\partial_{s}\chi(t,x,s,y)dyds\\
&\quad+\iint\mathrm{sgn}(\widetilde{F}(s,y)-F(t,x))\left(\widetilde{A}(s,\widetilde{F}(s,y))-\widetilde{A}(s,F(t,x))\right)\partial_{y}\chi(t,x,s,y)dyds\\
&\quad+\iint\mathrm{sgn}(\widetilde{F}(s,y)-F(t,x))\chi(t,x,s,y)\mathbf{S}[\widetilde F](s,y)dyds.
\end{align*}
Integrating both of the above inequalities over the free variables
and summing up gives
\begin{align*}
0&\le \iiiint\left|F(t,x)-\widetilde{F}(s,y)\right|\left(\partial_{t}\chi+\partial_{s}\chi\right) \\
&\quad+\iiiint\mathrm{sgn}(F(t,x)-\widetilde{F}(s,y))\left(A(t,F(t,x))-A(t,\widetilde{F}(s,y))\right)\partial_{x}\chi\\
&\quad+\iiiint\mathrm{sgn}(\widetilde{F}(s,y)-F(t,x))\left(\widetilde{A}(s,\widetilde{F}(s,y))-\widetilde{A}(s,F(t,x))\right)\partial_{y}\chi\\
&\quad+\iiiint\left(\chi\mathrm{sgn}(F(t,x)-\widetilde{F}(s,y))\mathbf{S}[F](t,x)+\chi\mathrm{sgn}(\widetilde{F}(s,y)-F(t,x))\mathbf{S}[\widetilde{F}](s,y)\right)dxdtdyds.
\end{align*}
\begin{equation}
\label{eq:}
\end{equation}
Now consider the variable change 

\[
\overline{y}=\frac{x-y}{2},\quad \overline{s}=\frac{t-s}{2},\quad \overline{x}=\frac{x+y}{2},\quad \overline{t}=\frac{t+s}{2},
\]
and take $\chi$ to be of the form 
\begin{align*}
\chi(t,x,s,y) &=b_{\varepsilon}\left(\frac{x-y}{2}\right)b_{\varepsilon}\left(\frac{t-s}{2}\right)g\left(\frac{x+y}{2}\right)h_{\delta}\left(\frac{t+s}{2}\right) \\
&=b_{\varepsilon}\left(\overline{y}\right)b_{\varepsilon}\left(\overline{s}\right)g\left(\overline{x}\right)h_{\delta}\left(\overline{t}\right).
\end{align*}
where $b_{\varepsilon},g,h_{\delta}$ are chosen as follows. Keep
$\sigma<\tau\in(0,T)$ fixed. For each $\varepsilon>0,\delta>0$ such
that $0<\varepsilon+\delta<\min(\sigma,T-\tau)$ we define the functions
as follows. 

1. $b_{\varepsilon}$ is an approximation of the identity, i.e. $b_{\varepsilon}=\frac{1}{\varepsilon}\zeta(\frac{x}{\varepsilon})$
for some radial $0\leq\zeta\in C_{0}^{\infty}(\mathbb{R})$ with $\mathrm{supp}(\zeta)\Subset B_{1}(0)$. 

2. $g(x)=1$ for all $x\in[-R,R]$ and $g\in C_{0}^{\infty}(\mathbb{R})$.

3. Let 
\[
h_{\delta}(t)\coloneqq\left\{ \begin{array}{c}
1,\ \ t\in[\sigma,\tau]\\
-\frac{1}{\delta}t+\frac{\tau+\delta}{\delta},\ \ t\in[\tau,\tau+\delta]\\
\frac{1}{\delta}t-\frac{\sigma-\delta}{\delta},\ \ t\in[\sigma-\delta,\sigma]\\
0,\ \ t\notin[\sigma-\delta,\tau+\delta]
\end{array}\right..
\]
With this choice we easily observe that $\chi(t,x,s,y)$ is compactly
supported in the variables $(t,x)$ for any fixed $(s,y)$, and viceversa. 
\begin{claim}
For each fixed $(s,y)\in(0,T)\times\mathbb{R}$ the function $(t,x)\mapsto\chi(t,x,s,y)$
is compactly supported in $(0,T)\times\mathbb{R}$. Vice versa, for
each fixed $(t,x)\in(0,T)\times\mathbb{R}$ the function $(s,y)\mapsto\chi(t,x,s,y)$
is compactly supported in $(0,T)\times\mathbb{R}$. 
\end{claim}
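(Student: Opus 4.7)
The plan is to read off the support of each of the four factors constituting $\chi$ and intersect the resulting constraints. Writing $\chi(t,x,s,y)=b_{\varepsilon}(\tfrac{x-y}{2})\,b_{\varepsilon}(\tfrac{t-s}{2})\,g(\tfrac{x+y}{2})\,h_{\delta}(\tfrac{t+s}{2})$, the following support containments are immediate from the definitions in 1.--3.:
\[
\operatorname{supp}(b_{\varepsilon})\subset[-\varepsilon,\varepsilon],\qquad \operatorname{supp}(h_{\delta})\subset[\sigma-\delta,\tau+\delta],\qquad \operatorname{supp}(g)\subset[-K,K]
\]
for some $K>0$ large enough (it suffices to take $K$ such that $\operatorname{supp}(g)\subset[-K,K]$).

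Fix $(s,y)\in(0,T)\times\mathbb{R}$ and suppose $\chi(t,x,s,y)\ne 0$. From the two factors depending on time we obtain simultaneously $|t-s|\leq 2\varepsilon$ and $t+s\in[2(\sigma-\delta),2(\tau+\delta)]$. Adding these bounds yields
\[
t\in(\sigma-\delta-\varepsilon,\tau+\delta+\varepsilon).
\]
Since by construction $\varepsilon+\delta<\min(\sigma,T-\tau)$, we conclude $\sigma-\delta-\varepsilon>0$ and $\tau+\delta+\varepsilon<T$, hence $t$ lies in a closed subinterval of $(0,T)$. From the two factors depending on the spatial variables we have $|x-y|\leq 2\varepsilon$ and $|x+y|\leq 2K$, which gives $|x|\leq K+\varepsilon$. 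Therefore the support of $(t,x)\mapsto\chi(t,x,s,y)$ is contained in the compact set $[\sigma-\delta-\varepsilon,\tau+\delta+\varepsilon]\times[-K-\varepsilon,K+\varepsilon]\subset(0,T)\times\mathbb{R}$, proving the first assertion.

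The second assertion follows by the obvious symmetry of $\chi$ under the swap $(t,x)\leftrightarrow(s,y)$: indeed, $b_{\varepsilon}$ is even (since $\zeta$ is radial), so $b_{\varepsilon}(\tfrac{x-y}{2})=b_{\varepsilon}(\tfrac{y-x}{2})$ and $b_{\varepsilon}(\tfrac{t-s}{2})=b_{\varepsilon}(\tfrac{s-t}{2})$, and the remaining factors are already symmetric. Hence for fixed $(t,x)\in(0,T)\times\mathbb{R}$ the same argument with the roles of $(t,x)$ and $(s,y)$ exchanged shows that $(s,y)\mapsto\chi(t,x,s,y)$ has compact support inside $(0,T)\times\mathbb{R}$. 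There is no real obstacle here beyond verifying that the two time constraints combine to keep $t$ (resp.\ $s$) away from the endpoints $0$ and $T$, which is exactly the role of the hypothesis $\varepsilon+\delta<\min(\sigma,T-\tau)$.
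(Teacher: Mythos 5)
Your proof is correct and follows essentially the same route as the paper: both arguments read off the support constraints coming from $b_{\varepsilon}$ and $h_{\delta}$, use the hypothesis $\varepsilon+\delta<\min(\sigma,T-\tau)$ to keep the time variable away from $0$ and $T$, and dispose of the second assertion by the symmetry of $\chi$ coming from $\zeta$ being radial. The only difference is cosmetic: you intersect the two time constraints $|t-s|\leq 2\varepsilon$ and $t+s\in[2(\sigma-\delta),2(\tau+\delta)]$ directly, which gives a bound on $t$ uniform in $s$ and avoids the paper's case analysis on whether $s\gtrless 2\varepsilon$ (resp.\ $s\gtrless T-2\varepsilon$).
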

\begin{proof}
That $x\mapsto\chi(t,x,s,y)$ vanishes outside some finite interval
is clear. In addition if $s>2\varepsilon$ then $b_{\varepsilon}(\frac{t-s}{2})=0$
for all $t\in(0,s-2\varepsilon)$, while if $s\leq2\varepsilon$ then
$h_{\delta}(\frac{t+s}{2})=0$ then since $\sigma>\varepsilon+\delta$
it follows that $2\sigma-2\delta-s>0$, so that $h_{\delta}(\frac{t+s}{2})=0$
for all $t\in(0,2\sigma-2\delta-s)$. Hence $\chi(t,x,s,y)=0$ for
all $t\in(0,\mathbf{1}_{s<2\varepsilon}(s-2\varepsilon)+\mathbf{1}_{s\geq2\varepsilon}2\sigma-2\delta-s)$
(note that the right end of the interval is strictly positive). If
$s<T-2\varepsilon$ then $b_{\varepsilon}(\frac{t-s}{2})=0$ for all $t\in(s+2\varepsilon,T)$
while if $s\geq T-2\varepsilon$ then $2\tau+2\delta-s\leq2\tau+2\delta+2\varepsilon-T<2T-T=T$
because $\tau<T-(\delta+\varepsilon)$. Therefore $\chi(t,x,s,y)=0$
for all $t\in(\mathbf{1}_{s<T-2\varepsilon}(s+2\varepsilon)+\mathbf{1}_{s\geq T-2\varepsilon}(2\tau+2\delta-s),T)$.
To conclude $t\mapsto\chi(t,x,s,y)$ vanishes outside some interval
compactly supported in $(0,T)$. Since $b_{\varepsilon}$ is radial,
by symmetry the same argument shows that for any fixed $(t,x)$ the
function $(s,y)\mapsto\chi(t,x,s,y)$ is compactly supported. 
\end{proof}
In addition, it is readily checked that 

\[
\partial_{t}+\partial_{s}=\partial_{\overline{t}},\quad \partial_{x}+\partial_{y}=\partial_{\overline{x}},\quad \partial_{x}-\partial_{y}=\partial_{\overline{y}}.
\]
Under this variable change, the second and third term in \ref{eq:}
can be grouped together as follows (to make the equations lighter
the arguments are of $F$ and $\widetilde{F}$ are always $(\overline{t}+\overline{s},\overline{x}+\overline{y})$
and $(\overline{t}-\overline{s},\overline{x}-\overline{y})$ respectively,
and are implicit )
\begin{align*}
  &\iiiint\mathrm{sgn}(F(t,x)-\widetilde{F}(s,y))\left(A(t,F(t,x))-A(t,\widetilde{F}(s,y))\right)\partial_{x}\chi \\
  &\qquad+\iiiint\mathrm{sgn}(\widetilde{F}(s,y)-F(t,x))\left(\widetilde{A}(s,\widetilde{F}(s,y))-\widetilde{A}(s,F(t,x))\right)\partial_{y}\chi\\
  &\quad=\frac{1}{2}\iiiint\mathrm{sgn}(F-\widetilde{F})\left(A(\overline{t}+\overline{s},F)-A(\overline{t}+\overline{s},\widetilde{F})\right)(\partial_{\overline{x}}+\partial_{\overline{y}})\chi\\
  &\qquad+\frac{1}{2}\iiiint\mathrm{sgn}(\widetilde{F}-F)\left(\widetilde{A}(\overline{t}-\overline{s},\widetilde{F})-\widetilde{A}(\overline{t}-\overline{s},F)\right)(\partial_{\overline{x}}-\partial_{\overline{y}})\chi\\
  &\quad=\frac{1}{2}\iiiint\mathrm{sgn}(F-\widetilde{F})\left(A(\overline{t}+\overline{s},F)+\widetilde{A}(\overline{t}-\overline{s},F)-A(\overline{t}+\overline{s},\widetilde{F})-\widetilde{A}(\overline{t}-\overline{s},\widetilde{F})\right)\partial_{\overline{x}}\chi\\
  &\qquad+\frac{1}{2}\iiiint\mathrm{sgn}(F-\widetilde{F})\left(A(\overline{t}+\overline{s},F)-\widetilde{A}(\overline{t}-\overline{s},F)-A(\overline{t}+\overline{s},\widetilde{F}(s,y))+\widetilde{A}(\overline{t}-\overline{s},\widetilde{F}(s,y))\right)\partial_{\overline{y}}\chi\\
  &\quad=\iiiint\mathrm{sgn}(F-\widetilde{F})\left(A_{+}(\overline{t}+\overline{s},\overline{t}-\overline{s},F)-A_{+}(\overline{t}+\overline{s},\overline{t}-\overline{s},\widetilde{F})\right)\partial_{\overline{x}}\chi\\
  &\qquad+\iiiint\mathrm{sgn}(F-\widetilde{F})\left(A_{-}(\overline{t}+\overline{s},\overline{t}-\overline{s},F)-A_{-}(\overline{t}+\overline{s},\overline{t}-\overline{s},\widetilde{F})\right)\partial_{\overline{y}}\chi,
\end{align*}
where we have set

\[
A_{\pm}(t,s,x)\coloneqq\frac{A(t,x)\pm\widetilde{A}(s,x)}{2}.
\]
The Inequality \ref{eq:} is then transformed to 
\begin{equation}
\begin{aligned}
0 &\le I_{\varepsilon,\delta}+II_{\varepsilon,\delta}+III_{\varepsilon,\delta}+IV_{\varepsilon,\delta}\\
&\coloneqq\iiiint\left|F(\overline{t}+\overline{s},\overline{x}+\overline{y})-\widetilde{F}(\overline{t}-\overline{s},\overline{x}-\overline{y})\right|\partial_{\overline{t}}\chi(\overline{t},\overline{x},\overline{s},\overline{y})d\overline{t}d\overline{x}d\overline{s}d\overline{y} \\
&\quad+\iiiint\mathrm{sgn}(F(\overline{t}+\overline{s},\overline{x}+\overline{y})-\widetilde{F}(\overline{t}-\overline{s},\overline{x}-\overline{y}))\left(A_{+}(\overline{t}+\overline{s},\overline{t}-\overline{s},F)-A_{+}(\overline{t}+\overline{s},\overline{t}-\overline{s},\widetilde{F})\right)\partial_{\overline{x}}\chi(\overline{t},\overline{x},\overline{s},\overline{y})d\overline{t}d\overline{x}d\overline{s}d\overline{y} \\
&\quad+\iiiint\mathrm{sgn}(F(\overline{t}+\overline{s},\overline{x}+\overline{y})-\widetilde{F}(\overline{t}-\overline{s},\overline{x}-\overline{y}))\left(A_{-}(\overline{t}+\overline{s},\overline{t}-\overline{s},F)-A_{-}(\overline{t}+\overline{s},\overline{t}-\overline{s},\widetilde{F})\right)\partial_{\overline{y}}\chi(\overline{t},\overline{x},\overline{s},\overline{y})d\overline{t}d\overline{x}d\overline{s}d\overline{y} \\
&\quad+\iiiint\chi\mathrm{sgn}(F(\overline{t}+\overline{s},\overline{x}+\overline{y})-\widetilde{F}(\overline{t}-\overline{s},\overline{x}-\overline{y}))\left(\mathbf{S}[F](\overline{t}+\overline{s},\overline{x}+\overline{y})-\mathbf{S}[\widetilde{F}](\overline{t}-\overline{s},\overline{x}-\overline{y})\right)d\overline{t}d\overline{x}d\overline{s}d\overline{y}.
\end{aligned}\label{the inequality for I+II+III+IV}
\end{equation}
We handle separately each one of the above integrals. 

\textbf{The integral} $I_{\varepsilon,\delta}$. Letting $\varepsilon\rightarrow0$
we get the 2D integral 

\[
I_{\delta}\coloneqq\underset{\varepsilon\rightarrow0}{\lim}I_{\varepsilon,\delta}=\int\int\left|F(\overline{t},\overline{x})-\widetilde{F}(\overline{t},\overline{x})\right|g\left(\overline{x}\right)h_{\delta}'\left(\overline{t}\right)d\overline{x}d\overline{t}.
\]

We can pass to the limit as $\delta\rightarrow0$ to find that 
\begin{align*}
I_{\delta}&=\int\int\left|F(\overline{t},\overline{x})-\widetilde{F}(\overline{t},\overline{x})\right|g\left(\overline{x}\right)h_{\delta}'\left(\overline{t}\right)\\
&=\frac{1}{\delta}\int_{\sigma-\delta}^{\sigma}\int\left|F(\overline{t},\overline{x})-\widetilde{F}(\overline{t},\overline{x})\right|g\left(\overline{x}\right)d\overline{x}d\overline{t}-\frac{1}{\delta}\int_{\tau}^{\tau+\delta}\int\left|F(\overline{t},\overline{x})-\widetilde{F}(\overline{t},\overline{x})\right|g\left(\overline{x}\right)d\overline{x}d\overline{t}\\ 
&\underset{\delta\rightarrow0}{\rightarrow}\int\left|F(\sigma,\overline{x})-\widetilde{F}(\sigma,\overline{x})\right|g\left(\overline{x}\right)d\overline{x}-\int\left|F(\tau,\overline{x})-\widetilde{F}(\tau,\overline{x})\right|g\left(\overline{x}\right)d\overline{x}\\
&=\int\left|F(\sigma,\overline{x})-\widetilde{F}(\sigma,\overline{x})\right|d\overline{x}-\int\left|F(\tau,\overline{x})-\widetilde{F}(\tau,\overline{x})\right|d\overline{x},   
\end{align*}
\begin{equation}
\label{Inequality for I}
\end{equation}
where in the last equation we relied on the choice of $g$ as well
as fact that $F-\widetilde{F}$ is $0$ outside $[-R,R]$. 

\textbf{The integral} $II_{\varepsilon,\delta}$. Letting $\varepsilon\rightarrow0$
the integral becomes 

\[
II_{\delta}=\underset{\varepsilon\rightarrow0}{\lim}II_{\varepsilon,\delta}=\iint\mathrm{sgn}(F(\overline{t},\overline{x})-\widetilde{F}(\overline{t},\overline{x}))\left(A_{+}(\overline{t},\overline{t},F(\overline{t},\overline{x}))-A_{+}(\overline{t},\overline{t},\widetilde{F}(\overline{t},\overline{x}))\right)g'\left(\overline{x}\right)h_{\delta}\left(\overline{t}\right)d\overline{t}d\overline{x}.
\]
Observe that $g'\left(\overline{x}\right)=0$ for all $\overline{x}\in\mathrm{supp}(A_{+}(\overline{t},\overline{t},F(\overline{t},\cdot))-A_{+}(\overline{t},\overline{t},\widetilde{F}(\overline{t},\cdot)))\subset[-R,R]$,
where the latter inclusion is because of the uniform in time Lipschitz
continuity assumed for $A,\widetilde{A}$. We therefore infer that
the above integral vanishes identically, i.e. 
\begin{equation}
II_{\delta}=0.\label{II}
\end{equation}

\textbf{The integral} $III_{\varepsilon,\delta}$. The following claim
is a straightforward adaptation of Lemma 5.4 in \cite{11}
\begin{claim}
The function $\gamma(\overline{t},F,\widetilde{F})\coloneqq\mathrm{sgn}(F-\widetilde{F})(A_{-}(\overline{t},\overline{t},F)-A_{-}(\overline{t},\overline{t},\widetilde{F}))$
is uniformly in time Lipschitz with respect to the first and second
variable with the bounds 

\[
\left|\gamma(\overline{t},\cdot,\widetilde{F})\right|_{\mathrm{Lip}}\leq\left|A_{-}(\overline{t},\overline{t},\cdot)\right|_{\mathrm{Lip}},\ \left|\gamma(\overline{t},F,\cdot)\right|_{\mathrm{Lip}}\leq\left|A_{-}(\overline{t},\overline{t},\cdot)\right|_{\mathrm{Lip}}.
\]
\end{claim}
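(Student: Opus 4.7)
The plan is to exploit a simple symmetry of $\gamma$ in its last two arguments to reduce to one Lipschitz bound, and then prove that bound by a short case analysis driven by the location of the two arguments relative to $\widetilde F$. First I would observe that swapping $F$ and $\widetilde F$ simultaneously flips the sign of $\mathrm{sgn}(F - \widetilde F)$ and of $A_-(\overline t,\overline t, F) - A_-(\overline t,\overline t, \widetilde F)$, so $\gamma(\overline t, F, \widetilde F) = \gamma(\overline t, \widetilde F, F)$. Consequently it suffices to establish the bound $|\gamma(\overline t, \cdot, \widetilde F)|_{\mathrm{Lip}} \le |A_-(\overline t,\overline t,\cdot)|_{\mathrm{Lip}}$ uniformly in $\widetilde F$; the companion bound in the second slot then follows for free.

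To carry out this reduced step, fix $\overline t$ and $\widetilde F$, abbreviate $a := A_-(\overline t, \overline t, \cdot)$ with Lipschitz constant $L := |a|_{\mathrm{Lip}}$, and consider $\phi(F) := \mathrm{sgn}(F - \widetilde F)(a(F) - a(\widetilde F))$, which vanishes at $F = \widetilde F$. Given $F_1, F_2 \in \mathbb R$ I would split on the sign of $(F_1 - \widetilde F)(F_2 - \widetilde F)$. If $F_1, F_2$ lie on the same side of $\widetilde F$ then the $\mathrm{sgn}$ factors coincide and cancel in the difference, giving $|\phi(F_1) - \phi(F_2)| = |a(F_1) - a(F_2)| \le L|F_1 - F_2|$. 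If instead they are on opposite sides, say $F_1 < \widetilde F < F_2$, then $\phi(F_1) = -(a(F_1)-a(\widetilde F))$ and $\phi(F_2) = a(F_2)-a(\widetilde F)$, so the triangle inequality together with the Lipschitz bound on $a$ applied to $[F_1, \widetilde F]$ and $[\widetilde F, F_2]$ separately gives
\[
|\phi(F_1) - \phi(F_2)| \le |a(F_2) - a(\widetilde F)| + |a(\widetilde F) - a(F_1)| \le L(F_2 - \widetilde F) + L(\widetilde F - F_1) = L|F_1 - F_2|.
\]
The boundary cases $F_i = \widetilde F$ are absorbed into the same-side case by continuity.

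No real obstacle is expected, which is precisely why the paper labels this a straightforward adaptation of \cite[Lemma 5.4]{11}. The two points worth highlighting are the symmetry observation, which halves the bookkeeping, and the uniformity in $\overline t$, which is automatic once one takes the supremum of $L$ over $\overline t \in [0,T]$ and invokes the hypothesis $A, \widetilde A \in C([0,T]; \mathrm{Lip}([-\tfrac12, \tfrac12]))$.
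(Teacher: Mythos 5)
Your proof is correct and follows essentially the same route as the paper: a case split on whether $F_{1},F_{2}$ lie on the same side of $\widetilde{F}$, with cancellation of the $\mathrm{sgn}$ factors in the first case and the triangle inequality through the common point $\widetilde{F}$ in the second. The only cosmetic difference is your symmetry remark $\gamma(\overline{t},F,\widetilde{F})=\gamma(\overline{t},\widetilde{F},F)$, which makes explicit the second Lipschitz bound that the paper leaves implicit.
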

\textit{Proof}. Keep $\overline{t}\in[0,T],\widetilde{F}\in[-\frac{1}{2},\frac{1}{2}]$
fixed. Let $F_{1},F_{2}\in[-\frac{1}{2},\frac{1}{2}]$. If $\mathrm{sgn}(F_{1}-\widetilde{F})=\mathrm{sgn}(F_{2}-\widetilde{F})$
then we have 

\[
\left|\gamma(\overline{t},F_{1},\widetilde{F})-\gamma(\overline{t},F_{2},\widetilde{F})\right|=\left|A_{-}(\overline{t},\overline{t},F_{1})-A_{-}(\overline{t},\overline{t},F_{2})\right|\leq\left|A_{-}(\overline{t},\overline{t},\cdot)\right|_{\mathrm{Lip}}\left|F_{1}-F_{2}\right|.
\]
If $\mathrm{sgn}(F_{1}-\widetilde{F})\neq\mathrm{sgn}(F_{2}-\widetilde{F})$
then 
\begin{align*}
 \left|\gamma(\overline{t},F_{1},\widetilde{F})-\gamma(\overline{t},F_{2},\widetilde{F})\right|&=\left|A_{-}(\overline{t},\overline{t},F_{1})-A_{-}(\overline{t},\overline{t},\widetilde{F})+A_{-}(\overline{t},\overline{t},F_{2})-A_{-}(\overline{t},\overline{t},\widetilde{F})\right|\\
 &\leq\left|A_{-}(\overline{t},\overline{t},\cdot)\right|_{\mathrm{Lip}}(\left|F_{1}-\widetilde{F}\right|+\left|F_{2}-\widetilde{F}\right|) \\
 &=\left|A_{-}(\overline{t},\overline{t},\cdot)\right|_{\mathrm{Lip}}\left|F_{1}-\widetilde{F}+\widetilde{F}-F_{2}\right|=\left|A_{-}(\overline{t},\overline{t},\cdot)\right|_{\mathrm{Lip}}\left|F_{1}-F_{2}\right|.   
\end{align*}
\begin{flushright}
$\square$
\par\end{flushright}
We can apply this observation together with Lemma \ref{Chain rule }
in order to integrate by parts and obtain 
\[
III_{\varepsilon,\delta}=-\iiiint\partial_{\overline{y}}\gamma(t,F,\widetilde{F})\chi.
\]
Moreover we get the bound 
\begin{align*}
&\left|\iiiint\partial_{\overline{y}}\gamma(t,F,\widetilde{F})\chi\right| \\
&\quad\leq\underset{\overline{t}\in[0,T]}{\sup}\left|A_{-}(\overline{t},\overline{t},\cdot)\right|_{\mathrm{Lip}}\iiiint\left(\left|\partial_{1}F(\overline{t}+\overline{s},\overline{x}+\overline{y})\right|+\left|\partial_{1}\widetilde{F}(\overline{t}-\overline{s},\overline{x}-\overline{y})\right|\right)b_{\varepsilon}\left(\overline{y}\right)b_{\varepsilon}\left(\overline{s}\right)g\left(\overline{x}\right)h_{\delta}\left(\overline{t}\right).
\end{align*}
Letting $\varepsilon\rightarrow0$ and setting $III_{\delta}\coloneqq\underset{\varepsilon\rightarrow0}{\lim}III_{\varepsilon,\delta}$
we arrive at 

\[
\left|III_{\delta}\right|\leq\underset{\overline{t}\in[0,T]}{\sup}\left|A_{-}(\overline{t},\overline{t},\cdot)\right|_{\mathrm{Lip}}\iint\left(\left|\partial_{1}F(\overline{t},\overline{x})\right|+\left|\partial_{1}\widetilde{F}(\overline{t},\overline{x})\right|\right)g\left(\overline{x}\right)h_{\delta}\left(\overline{t}\right)d\overline{x}d\overline{t}.
\]
Letting $\delta\rightarrow0$ this becomes 
\[
\underset{\delta\rightarrow0}{\lim}\left|III_{\delta}\right|\leq\underset{\overline{t}\in[0,T]}{\sup}\left|A_{-}(\overline{t},\overline{t},\cdot)\right|_{\mathrm{Lip}}\int_{\sigma}^{\tau}\int\left(\left|\partial_{1}F(\overline{t},\overline{x})\right|+\left|\partial_{1}\widetilde{F}(\overline{t},\overline{x})\right|\right)d\overline{x}d\overline{t}.
\]
Since for any fixed $\overline{t}$ 
\[
\int\left(\left|\partial_{1}F(\overline{t},\overline{x})\right|+\left|\partial_{1}\widetilde{F}(\overline{t},\overline{x})\right|\right)=2,
\]
we cocnlude 
\begin{equation}
\underset{\delta\rightarrow0}{\lim}\left|III_{\delta}\right|\leq2\underset{\overline{t}\in[0,T]}{\sup}\left|A_{-}(\overline{t},\overline{t},\cdot)\right|_{\mathrm{Lip}}(\tau-\sigma)=\underset{\overline{t}\in[0,T]}{\sup}\left|A(\overline{t},\cdot)-\widetilde{A}(\overline{t},\cdot)\right|_{\mathrm{Lip}}(\tau-\sigma).\label{Inequality for III}
\end{equation}

\textbf{The integral} $IV_{\varepsilon,\delta}$. The treatment of
this integral reflects the novelty for what concerns the stability
estimate, since the non-local source term in question stands in variance
to the one in \cite{11}. Letting $\varepsilon\rightarrow0$ we get
the 2D integral 

\[
IV_{\delta}\coloneqq\iint\mathrm{sgn}(F(\overline{t},\overline{x})-\widetilde{F}(\overline{t},\overline{x}))\left(\mathbf{S}[F](\overline{t},\overline{x})-\mathbf{S}[\widetilde{F}](\overline{t},\overline{x})\right)g\left(\overline{x}\right)h_{\delta}\left(\overline{t}\right)d\overline{t}d\overline{x}.
\]
The integral $IV_{\delta}$ is now to be bounded by 4 terms, each
of which is mastered separately. 
\begin{align*}
\left|IV_{\delta}\right|&\leq\iint\left|\mathbf{S}[F](t,x)-\mathbf{S}[\widetilde{F}](t,x)\right|g\left(x\right)h_{\delta}\left(t\right)dxdt\\
&\leq\iint\left|F(t,x)\phi\star F(t,x)-\widetilde{F}(t,x)\phi\star\widetilde{F}(t,x)\right|g\left(x\right)h_{\delta}\left(t\right)dxdt\\
&\quad+\iint\left|\int_{-\infty}^{x}\widetilde{F}(t,z)\partial_{z}\phi\star\widetilde{F}(t,z)-F(t,z)\partial_{z}\phi\star F(t,z)dz\right|g\left(x\right)h_{\delta}\left(t\right)dxdt\\
&\leq\iint\left|\left(F(t,x)-\widetilde{F}(t,x)\right)\phi\star F(t,x)\right|g\left(x\right)h_{\delta}\left(t\right)dxdt\\
&\quad+\iint\left|\widetilde{F}(t,x)\phi\star(F-\widetilde{F})(t,x)\right|g\left(x\right)h_{\delta}\left(t\right)dxdt\\
&\quad+\iint\left|\int_{-\infty}^{x}\widetilde{F}(t,z)(\partial_{z}\phi\star(\widetilde{F}-F)(t,z))dz\right|g\left(x\right)h_{\delta}\left(t\right)dxdt\\
&\quad+\iint\left|\int_{-\infty}^{x}\partial_{z}\phi\star F(t,z)(\widetilde{F}-F)(t,z))dz\right|g\left(x\right)h_{\delta}\left(t\right)dxdt\coloneqq\stackrel[k=1]{4}{\sum}J_{k}.
\end{align*}
\textit{Estimate on $J_{1}$}. 
\[
\left|\left(F(t,x)-\widetilde{F}(t,x)\right)\phi\star F(t,x)\right|\leq2R\left\Vert \phi\right\Vert _{\infty}\left|F(t,x)-\widetilde{F}(t,x)\right|,
\]
so that 
\begin{align*}
\left|J_{1}\right|&\leq2R\left\Vert \phi\right\Vert _{\infty}\iint\left|F(t,x)-\widetilde{F}(t,x)\right|g\left(x\right)h_{\delta}\left(t\right)dxdt\\
&\leq2R\left\Vert \phi\right\Vert _{\infty}\int\left\Vert F(t,\cdot)-\widetilde{F}(t,\cdot)\right\Vert _{1}h_{\delta}\left(t\right)dt\underset{\delta\rightarrow0}{\rightarrow}2R\left\Vert \phi\right\Vert _{\infty}\int_{\sigma}^{\tau}\left\Vert F(t,\cdot)-\widetilde{F}(t,\cdot)\right\Vert _{1}dt.
\end{align*}
\textit{Estimate on $J_{2}$}. 
\[
\left|\phi\star(F-\widetilde{F})(t,x)\right|\leq\left\Vert \phi\right\Vert _{\infty}\left\Vert (F-\widetilde{F})(t,\cdot)\right\Vert _{1},
\]
so that 
\begin{align*}
\left|J_{2}\right|&\leq\frac{1}{2}\left\Vert \phi\right\Vert _{\infty}\iint\left\Vert (F-\widetilde{F})(t,\cdot)\right\Vert _{1}g\left(x\right)h_{\delta}\left(t\right)dxdt\\
&\leq2R\left\Vert \phi\right\Vert _{\infty}\int\left\Vert (F-\widetilde{F})(t,\cdot)\right\Vert _{1}h_{\delta}\left(t\right)dt\underset{\delta\rightarrow0}{\rightarrow}2R\left\Vert \phi\right\Vert _{\infty}\int_{\sigma}^{\tau}\left\Vert F(t,\cdot)-\widetilde{F}(t,\cdot)\right\Vert _{1}dt.
\end{align*}
\textit{Estimate on $J_{3}$}. Since $\mathrm{supp}(\partial_{z}\phi\star(\widetilde{F}-F))\subset\mathrm{supp}(\phi)+\mathrm{supp}(\widetilde{F}-F)\subset[-(r+R),r+R]$
it follows that 
\begin{align*}
\left|\int_{-\infty}^{x}\widetilde{F}(t,z)(\partial_{z}\phi\star(\widetilde{F}-F)(t,z))dz\right|&\leq\left|\int_{-(r+R)}^{r+R}\widetilde{F}(t,z)(\partial_{z}\phi\star(\widetilde{F}-F)(t,z))dz\right|\\
&\leq(r+R)\left\Vert \partial_{z}\phi\right\Vert _{\infty}\left\Vert \widetilde{F}(t,\cdot)-F(t,\cdot)\right\Vert _{1}.
\end{align*}
Consequently 
\begin{align*}
\left|J_{3}\right|&\leq(r+R)\left\Vert \partial_{z}\phi\right\Vert _{\infty}\iint\left\Vert \widetilde{F}(t,\cdot)-F(t,\cdot)\right\Vert _{1}g\left(x\right)h_{\delta}\left(t\right)dxdt\\
&\leq(r+R)\left\Vert \partial_{z}\phi\right\Vert _{\infty}\iint\left\Vert \widetilde{F}(t,\cdot)-F(t,\cdot)\right\Vert _{1}g\left(x\right)h_{\delta}\left(t\right)dxdt\\
&\leq4R(r+R)\left\Vert \partial_{z}\phi\right\Vert _{\infty}\int\left\Vert \widetilde{F}(t,\cdot)-F(t,\cdot)\right\Vert _{1}h_{\delta}\left(t\right)dt\\
&\underset{\delta\rightarrow0}{\rightarrow}4R(r+R)\left\Vert \partial_{z}\phi\right\Vert _{\infty}\int_{\sigma}^{\tau}\left\Vert \widetilde{F}(t,\cdot)-F(t,\cdot)\right\Vert _{1}dt.    
\end{align*}
\textit{Estimate on $J_{4}$}. 
\[
\left|\int_{-\infty}^{x}\partial_{z}\phi\star F(t,z)(\widetilde{F}-F)(t,z))dz\right|
\leq2R\left\Vert \partial_{z}\phi\right\Vert _{\infty}\left\Vert \widetilde{F}(t,\cdot)-F(t,\cdot)\right\Vert _{1}.
\]
As a result 
\begin{align*}
\left|J_{4}\right|&\leq2R\left\Vert \partial_{z}\phi\right\Vert _{\infty}\iint\left\Vert \widetilde{F}(t,\cdot)-F(t,\cdot)\right\Vert _{1}g\left(x\right)h_{\delta}\left(t\right)dxdt\\
&\leq8R^{2}\left\Vert \partial_{z}\phi\right\Vert _{\infty}\int\left\Vert \widetilde{F}(t,\cdot)-F(t,\cdot)\right\Vert _{1}h_{\delta}\left(t\right)dt\\
&\underset{\delta\rightarrow0}{\rightarrow}8R^{2}\left\Vert \partial_{z}\phi\right\Vert _{\infty}\int_{\sigma}^{\tau}\left\Vert \widetilde{F}(t,\cdot)-F(t,\cdot)\right\Vert _{1}dt.    
\end{align*}
Summarizing 

\begin{equation}
\left|\underset{\delta\rightarrow0}{\lim}\underset{\varepsilon\rightarrow0}{\lim}IV_{\varepsilon,\delta}\right|\leq C(r,R,\left\Vert \phi\right\Vert _{\infty},\left\Vert \partial_{z}\phi\right\Vert _{\infty})\int_{\sigma}^{\tau}\left\Vert F(t,\cdot)-\widetilde{F}(t,\cdot)\right\Vert _{1}dt.\label{eq:Inequality for IV}
\end{equation}

\textbf{Step 4. Conclusion.} The combination of \eqref{Inequality for I},\eqref{II},\eqref{Inequality for III},
\eqref{eq:Inequality for IV} and \eqref{the inequality for I+II+III+IV}
yields the inequality 
\begin{align*}
\underset{t\in[0,T]}{\sup}\left|A(t,\cdot)-\widetilde{A}(t,\cdot)\right|_{\mathrm{Lip}}(\tau-\sigma)&+C\int_{\sigma}^{\tau}\left\Vert F(t,\cdot)-\widetilde{F}(t,\cdot)\right\Vert _{1}dt\\
&+\int\left|F(\sigma,x)-\widetilde{F}(\sigma,x)\right|dx-\int\left|F(\tau,x)-\widetilde{F}(\tau,x)\right|dx\geq0,
\end{align*}
for some constant $C=C(r,R,\left\Vert \partial_{z}\phi\right\Vert _{\infty},\left\Vert \phi\right\Vert _{\infty})$.
In particular we get 

\[
\left\Vert F(t,\cdot)-\widetilde{F}(t,\cdot)\right\Vert _{1}\leq t\underset{t\in[0,T]}{\sup}\left|A(t,\cdot)-\widetilde{A}(t,\cdot)\right|_{\mathrm{Lip}}+\left\Vert F(0,\cdot)-\widetilde{F}(0,\cdot)\right\Vert _{1}+C\int_{0}^{t}\left\Vert F(s,\cdot)-\widetilde{F}(s,\cdot)\right\Vert _{1}ds,
\]
which by Gronwall implies 

\[
\left\Vert F(t,\cdot)-\widetilde{F}(t,\cdot)\right\Vert _{1}\leq e^{Ct}\left(\left\Vert F(0,\cdot)-\widetilde{F}(0,\cdot)\right\Vert _{1}+t\underset{t\in[0,T]}{\sup}\left|A(t,\cdot)-\widetilde{A}(t,\cdot)\right|_{\mathrm{Lip}}\right),
\]
as desired. 
\begin{flushright}
$\square$
\par\end{flushright}
\begin{rem}
Given initial data $F^{\mathrm{0}}$, it is not difficult to construct
explicitly initial weights $m_{i}^{\mathrm{0}}$ and initial opinions
$x_{i}^{\mathrm{0}}$ such that $\left\Vert -\frac{1}{2}+\frac{1}{N}\stackrel[i=1]{N}{\sum}m_{i}^{\mathrm{0}}H(x-x_{i}^{\mathrm{0}})-F^{\mathrm{0}}\right\Vert _{1} \underset{N\rightarrow\infty}{\rightarrow}0$,
thereby witnessing the fact that the assumption $\left\Vert F_{N}^{\mathrm{0}}-F^{\mathrm{0}}\right\Vert _{1}\underset{N\rightarrow\infty}{\rightarrow}0$
is reasonably typical. See Lemma 5.3 in {[}11{]} for a guidance how
to do this. 
\end{rem}

\section{\label{Appendix A} Appendix A: The Rankine--Hugoniot and Oleinik
Conditions Revisited}

The Rankine--Hugoniot and Oleinik conditions are a very standard tool
in the theory of conservation laws. However it seems that the literature
typically formulates these conditions for conservation laws with time
independent fluxes, no source terms and when there is only a single
curve of discontinuity. All of these additions are completely harmless,
but for the sake of completeness we revisit the derivation of the
entropy inequality subject in this slightly more general settings.
The equation is as usual 
\begin{equation}
\partial_{t}F+\partial_{x}(A(t,F))=\mathbf{S}[F](t,x).\label{appendix burgers}
\end{equation}

\begin{prop}
\textup{(Oleinik condition)} Suppose there are times $0=T_{0}<T_{1}<...<T_{k-1}<T_{k}=T$
such that for each $1\leq j\leq k$ there are curves $\left\{ (t,s_{i}^{j}(t))\right\} _{i=1}^{m_{j}}$
($m_{j}\in\mathbb{N}$) such that $F$ is a classical solution to
Equation (\ref{appendix burgers}) on 
\[
V_{L}^{i,j}\coloneqq\left\{ (t,x)\left|t\in[T_{j-1},T_{j}),s_{i-1}^{j}(t)\leq x<s_{i}^{j}(t)\right.\right\} 
\]
and 
\[
V_{R}^{i,j}\coloneqq\left\{ (t,x)\left|t\in[T_{j-1},T_{j}),s_{i}^{j}(t)<x\leq s_{i+1}^{j}(t)\right.\right\} ,
\]
with the convention $s_{0}^{j}(t)=-\infty$ and $s_{m_{j}+1}^{j}(t)=+\infty$.
For each $t\in[T_{j-1},T_{j})$ let 

\[
F_{L}^{i,j}(t)\coloneqq\underset{x\nearrow s_{i}^{j}(t)}{\lim}F(t,x),\ F_{R}^{i,j}(t)\coloneqq\underset{x\searrow s_{i}^{j}(t)}{\lim}F(t,x).
\]
Suppose that for each $t\in[T_{j-1},T_{j})$ and each $\theta\in(F_{L}^{i,j}(t),F_{R}^{i,j}(t))$
it holds that 
\begin{equation}
\frac{A(t,F_{L}^{i,j}(t))-A(t,F_{R}^{i,j}(t))}{F_{L}^{i,j}(t)-F_{R}^{i,j}(t)}=\dot{s}_{i}^{j}(t)\label{eq:RH}
\end{equation}
and
\begin{equation}
\frac{A(t,\theta)-A(t,F_{L}^{i,j}(t))}{\theta-F_{L}^{i,j}(t)}\geq\dot{s}_{i}^{j}(t).\label{Oleinik condition}
\end{equation}
Then for each $\alpha\in\mathbb{R}$ $F$ satisfies the entropy inequality 
\begin{equation}
\iint_{V}\partial_{t}\chi(F-\alpha)\mathrm{sgn}(F-\alpha)+\partial_{x}\chi\left(A(F)-A(\alpha)\right)\mathrm{sgn}(F-\alpha)+\chi\mathrm{sgn}(F-\alpha)\mathbf{S}[F](t,x)dxdt\geq0.\label{eq:entropy inequality}
\end{equation}
\end{prop}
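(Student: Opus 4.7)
The plan follows the classical Kruzkov derivation of the entropy inequality from the Rankine--Hugoniot and Oleinik admissibility conditions, adapted to allow a time-dependent flux $A(t,\cdot)$ and the nonlocal source $\mathbf{S}[F]$. Fix $\alpha\in\mathbb{R}$ and a non-negative $\chi\in C_0^\infty((0,T)\times\mathbb{R})$. On any open region where $F$ is a classical solution, \eqref{appendix burgers} holds pointwise; multiplying by $\mathrm{sgn}(F-\alpha)$, which is locally constant on $\{F\neq\alpha\}$, and exploiting that $A(t,\alpha)$ is constant in $x$, one obtains the entropy identity
\[
\partial_t|F-\alpha| + \partial_x\!\left[\mathrm{sgn}(F-\alpha)\bigl(A(t,F)-A(t,\alpha)\bigr)\right] = \mathrm{sgn}(F-\alpha)\,\mathbf{S}[F](t,x)
\]
as a classical equality on each $V_L^{i,j}$ and $V_R^{i,j}$.

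Next I multiply by $\chi$, rewrite the left-hand side as a space-time divergence via the product rule, and apply the divergence theorem on each region. Summing over all regions, the contributions at $t=0,T$ and at spatial infinity vanish by compact support of $\chi$, while the contributions along the horizontal interfaces $t=T_j$ cancel in pairs since $F$ is continuous across those times (in particular so are $|F-\alpha|$ and the entropy flux). What remains are line integrals along each shock curve $(t,s_i^j(t))$. Parametrizing by $t$, with outward normals $\pm(-\dot s_i^j,1)/\sqrt{1+(\dot s_i^j)^2}$ coming from the regions on the left and right of the curve, the two one-sided contributions combine so that the entropy integral in \eqref{eq:entropy inequality} equals
\[
\sum_{i,j}\int \chi(t,s_i^j(t))\,\mathcal{J}_i^j(t)\,dt,
\]
where
\[
\mathcal{J}_i^j(t) \coloneqq \dot s_i^j(t)\bigl(|F_R^{i,j}-\alpha|-|F_L^{i,j}-\alpha|\bigr) - \bigl[\mathrm{sgn}(F_R^{i,j}-\alpha)(A(t,F_R^{i,j})-A(t,\alpha))-\mathrm{sgn}(F_L^{i,j}-\alpha)(A(t,F_L^{i,j})-A(t,\alpha))\bigr].
\]

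Since $\chi\geq 0$, the proof reduces to verifying $\mathcal{J}_i^j(t)\geq 0$ pointwise. Using the monotonicity of $F$ in $x$ one has $F_L^{i,j}\leq F_R^{i,j}$, and I split into three cases. When $\alpha\leq F_L^{i,j}$ or $\alpha\geq F_R^{i,j}$ the two signs are equal and constant across the shock, so $\mathcal{J}_i^j$ collapses (up to a common sign) to $\dot s_i^j(F_R^{i,j}-F_L^{i,j})-(A(t,F_R^{i,j})-A(t,F_L^{i,j}))$, which is zero by the Rankine--Hugoniot condition \eqref{eq:RH}. When $F_L^{i,j}<\alpha<F_R^{i,j}$, using \eqref{eq:RH} to eliminate $A(t,F_R^{i,j})$ in the algebraic identity yields
\[
\mathcal{J}_i^j(t) = 2\bigl[(A(t,\alpha)-A(t,F_L^{i,j})) - \dot s_i^j(t)(\alpha-F_L^{i,j})\bigr],
\]
and the bracket is non-negative precisely by the Oleinik condition \eqref{Oleinik condition} (after clearing the positive denominator $\alpha-F_L^{i,j}$). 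Combining the three cases proves the entropy inequality.

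The main bookkeeping difficulty is keeping track of signs in the integration by parts and in the combinatorial case split; once the algebraic identity in the non-trivial case is reduced as above, Oleinik and Rankine--Hugoniot do the rest. The value set $\{F=\alpha\}$ poses no obstruction because, with the convention $\mathrm{sgn}(0)=0$, the entropy identity is satisfied there trivially, and the source term $\mathbf{S}[F]$ carries through unchanged in the integration by parts since it is multiplied by $\chi$ rather than by its derivatives, so no further estimates are needed for the source.
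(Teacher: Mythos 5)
Your proposal is correct, and it reaches the same reduction as the paper (integrate by parts region by region, kill the interior terms because $F$ solves the equation classically there, and be left with a line integral along each shock whose integrand must be shown nonnegative), but it handles the key step differently. The paper never touches the nonsmooth Kruzkov entropy directly: it proves the jump inequality $\psi(F_{L})-\psi(F_{R})-\dot{s}\,(\eta(F_{L})-\eta(F_{R}))\geq0$ for every convex $\eta\in C^{1,1}$ with $\psi'=\eta'A'$, by writing $\psi(F_{R})-\psi(F_{L})=-\int_{F_{L}}^{F_{R}}\eta''(y)\left(A(t,y)-A(t,F_{L})\right)dy+\eta'(F_{R})\left(A(t,F_{R})-A(t,F_{L})\right)$ and feeding the Oleinik condition for \emph{all} intermediate $\theta$ into the integral against $\eta''\geq0$, and only at the very end regularizes $|z-\alpha|$ by $\mathfrak{s}_{\varepsilon}(z-\alpha)$ and passes to the limit. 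You instead work with $|F-\alpha|$ from the start and verify the jump quantity $\mathcal{J}_{i}^{j}$ by the explicit three-case computation, which is correct (your algebra in the case $F_{L}<\alpha<F_{R}$, reducing to $2[(A(t,\alpha)-A(t,F_{L}))-\dot{s}(\alpha-F_{L})]\geq0$, is exactly the Oleinik condition at $\theta=\alpha$, and the outer cases collapse to Rankine--Hugoniot); this is more elementary and shows that only $\theta=\alpha$ is needed for each fixed $\alpha$, whereas the paper's route covers all convex entropies at once. The price of your shortcut is the assertion that the Kruzkov entropy identity holds ``as a classical equality'' on each region: at points where $F$ crosses $\alpha$ the entropy is not differentiable, so what you actually have is an a.e.\ (equivalently distributional) identity, justified either by a Lipschitz chain rule together with the continuity of $\mathrm{sgn}(F-\alpha)(A(t,F)-A(t,\alpha))$ across such crossings, or by exactly the smooth-entropy approximation the paper uses; similarly, the ordering $F_{L}^{i,j}\leq F_{R}^{i,j}$ that your case split relies on is not a stated hypothesis of the proposition but comes from the monotonicity of $F$ in the intended application (the paper's proof makes the same implicit orientation choice in its integral over $[F_{L},F_{R}]$). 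These are minor points of rigor rather than gaps in the idea.
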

\textit{Proof}. Let us first consider $\eta:\mathbb{R}\rightarrow\mathbb{R},\psi:\mathbb{R}\rightarrow\mathbb{R}$
where $\eta\in C^{1,1}(\mathbb{R})$ is convex and $\psi'=\eta'A'$.
Then we have that 
\begin{align*}
\iint\partial_{t}\chi\eta(F)+\partial_{x}\chi\psi(F)+\chi\mathbf{S}[F](t,x)dxdt  =&\,\stackrel[j=1]{k}{\sum}\stackrel[i=1]{m_{j}}{\sum}\iint_{V_{L}^{i,j}}\left(\partial_{t}\chi\eta(F)+\partial_{x}\chi\psi(F)+\chi\eta'(F)\mathbf{S}[F](t,x)\right)dxdt \\
&+\stackrel[j=1]{k}{\sum}\stackrel[i=1]{m_{j}}{\sum}\iint_{V_{R}^{i,j}}\left(\partial_{t}\chi\eta(F)+\partial_{x}\chi\psi(F)+\chi\eta'(F)\mathbf{S}[F](t,x)\right)dxdt.
\end{align*}
Set $\Gamma_{i,j}\coloneqq\left\{ (t,s_{i}^{j}(t))\left|t\in[T_{j-1},T_{j})\right.\right\} $.
Keep $1\leq j\leq k$ and $1\leq i\leq m_{j}$ fixed and take a test
function $\chi\in C_{0}^{\infty}((0,T)\times\mathbb{R})$. For readability,
we omit the indices $i,j$. Using that $F$ is a classical solution
on $V_{L},V_{R}$ we get 
\begin{align*}
&\iint_{V_{L}}\left(\partial_{t}\chi\eta(F)+\partial_{x}\chi\psi(F)+\chi\eta'(F)\mathbf{S}[F](t,x)\right)dxdt \\
&\qquad+\iint_{V_{R}}\left(\partial_{t}\chi\eta(F)+\partial_{x}\chi\psi(F)+\chi\eta'(F)\mathbf{S}[F](t,x)\right) dxdt
\\
&\quad=\iint_{V_{L}}\left(-\chi\partial_{t}\eta(F)-\chi\partial_{x}\psi(F)+\chi\eta'(F)\mathbf{S}[F](t,x)\right)dxdt \\
&\qquad+\iint_{V_{R}}\left(-\chi\partial_{t}\eta(F)-\chi\partial_{x}\psi(F)+\chi\eta'(F)\mathbf{S}[F](t,x)\right)dxdt \\
&\qquad+\int_{\Gamma}\left(\chi\eta(F_{L})\nu^{1}+\chi\psi(F_{L})\nu^{2}\right)d\sigma-\int_{\Gamma}\chi\eta(F_{R})\nu^{1}+\chi\psi(F_{R})\nu^{2})d\sigma \\
&\quad=\int_{\Gamma}\left(\chi(\psi(F_{L})-\psi(F_{R}))\nu^{1}+\chi(\eta(F_{L})-\eta(F_{R}))\nu^{2}\right)d\sigma.
\end{align*}
Here 
\[
(\nu^{1},\nu^{2})=\frac{1}{\sqrt{1+\dot{s}^{2}}}(-\dot{s},1),
\]
and the last equality is because of the identity 

\[
\partial_{t}\eta(F)+\partial_{x}\psi(F)+\eta'(F)\mathbf{S}[F]=0,
\]
which is easily derived using that $F$ is a classical solution on
each region. We claim that 

\[
(\eta(F_{L})-\eta(F_{R}))-(\psi(F_{L})-\psi(F_{R}))\dot{s}(t)\geq0.
\]
First, we integrate by parts to find that 
\begin{align*}
\psi(F_{R})-\psi(F_{L})&=\int_{F_{L}}^{F_{R}}\eta'(y)A'(t,y)dy \\
&=-\int_{F_{L}}^{F_{R}}\eta''(y)(A(t,y)-A(t,F_{L}(t)))dy+\left.\eta'(\cdot)(A(t,\cdot)-A(t,F_{L}))\right|_{F_{L}}^{F_{R}}
\\
&=-\int_{F_{L}}^{F_{R}}\eta''(y)(A(t,y)-A(t,F_{L}))dy+\eta'(F_{R}(t))(A(t,F_{R}(t))-A(t,F_{L}(t))),
\end{align*}
and so thanks to the assumption (\ref{Oleinik condition}) and the
convexity of $\eta$ we have the inequality 
\begin{equation}
\begin{aligned}
\psi(F_{L})-\psi(F_{R})&=\int_{F_{L}}^{F_{R}}\eta''(y)(A(t,y)-A(t,F_{L}))dy-\eta'(F_{R})(A(t,F_{R})-A(t,F_{L})) \\
&\geq\dot{s}(t)\int_{F_{L}}^{F_{R}}\eta''(y)(y-F_{L})dy-\eta'(F_{R})(A(t,F_{R})-A(t,F_{L}).\label{eq:-1}
\end{aligned}
\end{equation}
On the other hand 
\begin{align*}
\eta(F_{R})-\eta(F_{L})&=\int_{F_{L}}^{F_{R}}\eta'(y)dy=-\int_{F_{L}}^{F_{R}}\eta''(y)(y-F_{L})dy+\left.(y-F_{L})\eta'\right|_{F_{L}}^{F_{R}} \\
&=-\int_{F_{L}}^{F_{R}}\eta''(y)(y-F_{L}(t))dy+(F_{R}-F_{L})\eta'(F_{R}).
\end{align*}
Together with inequality (\ref{eq:-1}) the last equation entails
\[
\psi(F_{L})-\psi(F_{R})-\dot{s}(t)(\eta(F_{L})-\eta(F_{R}))
\geq\eta'(F_{R})\left(\dot{s}(t)(F_{R}-F_{L})-A(t,F_{R})-A(t,F_{L})\right)=0,
\]
where the last equality is because of the assumption (\ref{eq:RH}).
It follows that 
\begin{equation}
\int\partial_{t}\chi\eta(F)+\partial_{x}\chi\psi(F)+\chi\eta'(F)\mathbf{S}[F](t,x)dxdt\geq0.\label{eq:entropy inequality for regular flux pair}
\end{equation}
To finish, we use a standard approximation argument. For each $\varepsilon>0$
consider the convex function $\mathfrak{s}_{\varepsilon}\in C^{1,1}(\mathbb{R})$
defined by 
\[
\mathfrak{s}_{\varepsilon}(z)\coloneqq
\begin{cases}
\frac{1}{2\varepsilon}z^{2}, & \left|z\right|\leq\varepsilon, \\
\left|z\right|-\frac{\varepsilon}{2}, & \left|z\right|>\varepsilon.
\end{cases}
\]
and for each $\alpha\in\mathbb{R}$ let 
\[
\eta_{\varepsilon}(z)\coloneqq\mathfrak{s}_{\varepsilon}(z-\alpha).
\]
It is clear that we have pointwise convergence 

\[
\eta_{\varepsilon}\underset{\varepsilon\rightarrow0}{\rightarrow}\left|z-\alpha\right|,\ \eta_{\varepsilon}'\underset{\varepsilon\rightarrow0}{\rightarrow}\mathrm{sgn}(z-\alpha).
\]
Furthermore we can take 

\[
\psi_{\varepsilon}(F)=\int_{\alpha}^{F}\eta_{\varepsilon}'A'(t,y)dy=\int_{\alpha}^{F}\eta_{\varepsilon}'(A(t,y)-A(t,\alpha))'dy,
\]
and as $\varepsilon\rightarrow0$ we get 

\[
\psi_{\varepsilon}(F)\underset{\varepsilon\rightarrow0}{\rightarrow}\int_{\alpha}^{F}\mathrm{sgn}(y-\alpha)(A(t,y)-A(t,\alpha))'dy=\mathrm{sgn}(F-\alpha)(A(t,F)-A(t,\alpha))
\]
pointwise. Therefore testing inequality (\ref{eq:entropy inequality for regular flux pair})
with $\eta_{\varepsilon},\psi_{\varepsilon}$ and taking the limit
as $\varepsilon\rightarrow0$ yields the asserted inequality (\ref{eq:entropy inequality}). 
\begin{flushright}
$\square$
\par\end{flushright}

\subsubsection*{Acknowledgments}
IBP and JAC were supported by the EPSRC grant number EP/V051121/1.
This work was also supported by the Advanced Grant Nonlocal-CPD (Nonlocal PDEs for Complex Particle Dynamics: Phase Transitions, Patterns and Synchronization) of the European Research Council Executive Agency (ERC) under the European Union's Horizon 2020 research and innovation programme (grant agreement No. 883363). STG was supported by the Research Council of Norway through grant no.~286822, \textit{Wave Phenomena and Stability --- a Shocking Combination (WaPheS)}. We thank the comments of the anonymous referee which have improved the quality of this work.

%\bibliographystyle{abbrv}
%\bibliography{ref}

\end{document}